\theoremstyle{plain}
\newtheorem{thm}{Theorem}[section]
\newtheorem{propo}[thm]{Proposition}
\newtheorem{lem}[thm]{Lemma}
\theoremstyle{definition}
\renewcommand{\Re}{{\rm Re}}
\renewcommand{\Im}{{\rm Im}}
\newcommand{\R}{\mathbb{R}}
\newcommand{\C}{\mathbb{C}}
\newcommand{\Z}{\mathbb{Z}}
\newcommand{\N}{\mathbb{N}}
\newcommand{\hh}{\mathscr{H}}
\renewcommand{\H}{\mathbb{H}^2}
\title[]{Determinants of Laplacians on random hyperbolic surfaces}
\author[F.~Naud]{By Fr\'ed\'eric Naud}
\address{%
Fr\'ed\'eric Naud\\
Institut Math\'ematique de Jussieu\\
Universit\'e Pierre et Marie Curie, 4 place Jussieu, 75252 Paris Cedex 05\\
France.
}
\email{frederic.naud@imj-prg.fr}
\subjclass{}
\keywords{}
\begin{document}
\bibliographystyle{plain}

\maketitle

\begin{abstract}
For sequences $(X_j)$ of random closed hyperbolic surfaces with volume $\mathrm{Vol}(X_j)$ tending to infinity, we prove that  
there exists a universal constant $E>0$  such that for all $\epsilon>0$, the regularized determinant of the Laplacian satisfies 
$$\frac{\log \det(\Delta_{X_j})}{\mathrm{Vol}(X_j)}\in [E-\epsilon,E+\epsilon]$$
with high probability as $j\rightarrow +\infty$.
This result holds for various models of random surfaces, including the Weil--Petersson model.
\end{abstract}

\section{Introduction and results}
\subsection{On determinants}
Let $X=\Gamma \backslash \H$ be a compact connected hyperbolic surface obtained as a quotient of the hyperbolic plane $\H$ by a discrete co-compact torsion-free group of orientation-preserving isometries. The hyperbolic Laplacian $\Delta_X$ on $L^2(X)$ has
a pure point spectrum which we denote by 
$$0=\lambda_0<\lambda_1\leq \ldots \leq \lambda_j\leq\ldots. $$
For all $s\in \C$ with $\Re(s)$ large enough, we know by Weyl's law that the spectral zeta function
$$\zeta_X(s)=\sum_{j=1}^\infty \frac{1}{\lambda_j^s}$$
is well defined and holomorphic.  The regularized determinant is then usually defined by
$$\log\det(\Delta_X):=-\zeta_X'(0),$$
provided one can prove an analytic extension to $s=0$ of $\zeta_X$. Practically, one performs a meromorphic continuation by noticing that for large $\Re(s)$ we have
$$\zeta_X(s)=\frac{1}{\Gamma(s)} \int_0^\infty t^{s-1} (\mathrm{Tr}(e^{-t\Delta_X})-1)dt, $$
where $e^{-t\Delta_X}$ is the heat semi-group. The small time asymptotics at $t\sim 0$ of the heat kernel is the main tool which allows to ``renormalize" the divergent behavior at $t=0$ and obtain the meromorphic continuation, see for example Chavel \cite[Page 156]{Chavel}.

In the literature, Polyakov's string theory \cite{Polyakov, DP2} has emphasized the role of determinants on Riemann surfaces. In particular, the computation of ``partition functions" in perturbative string theory involves formal sums over all genera of averages of determinants over the moduli space which have proved since then to be divergent, see Wolpert \cite{Wolpert}. Several authors have provided \cite{Sarnak,BS,DP} some explicit formulas for regularized determinants for various Laplace-like operators on Riemann surfaces.  In small genus, it is possible to compute accurately such determinants by reducing to certain sums over closed geodesics which provide a fast convergence, see \cite{PR,Str}. In variable curvature, the behavior of determinants in a conformal class has been studied
by Osgood, Phillips and Sarnak \cite{OPS}, in particular the constant curvature metric maximizes the determinant.

In higher dimensions, determinants of Laplacians on differential forms are related to the so-called analytic torsion, which in turn is related to important topological invariants by results of Cheeger and M\"uller \cite{Cheeger,Muller}. In particular the work of Bergeron--Venkatesh \cite{BV} establishes exponential growth of the analytic torsion for certain families of covers of arithmetic manifolds.

In the case of Riemann surfaces, if $\Gamma$ is a co-compact arithmetic Fuchsian group derived from a quaternion algebra, one can define congruence covers
$$X_{\mathscr{P}}:=\Gamma(\mathscr{P})\backslash \H$$
 of $X=\Gamma \backslash \H$ by looking at {\it prime ideals} $\mathscr{P}$ in the ring of integers of the corresponding number field. 
We denote by $\Vert \mathscr{P} \Vert$ the norm of ideals.
Using the uniform spectral gap of these surfaces proved by Sarnak and Xue in \cite{SX}, together with the fact that the injectivity radius goes to infinity as $\Vert \mathscr{P} \Vert\rightarrow \infty$, see in \cite{KSV}, one can readily show (for example by using the arguments from \cite{B1} or as a direct application of Theorem \ref{main2}) that
$$\lim_{\Vert \mathscr{P} \Vert\rightarrow \infty} \frac{\log\det \Delta_{X_{\mathscr{P}}}}{\mathrm{Vol}(X_{\mathscr{P}})}=E,$$
where $E>0$ is some universal constant. Arithmetic surfaces being highly non-generic, it is therefore natural to ask if this behavior is typical among larger families of surfaces whose volume (equivalently genus) goes to infinity.

\subsection{Models of random surfaces}
In this paper we will focus on the behaviour of determinants of the Laplacian in the large volume (equivalently large genus) regime, using probabilistic tools. The first historical model of random compact Riemann surfaces in the mathematics literature is perhaps the model of {\bf Brooks--Makover \cite{BM}} which is based on random $3$-regular graphs as follows. Consider $\mathcal{G}_n$ a $3$-regular graph on $2n$ vertices, endowed with an orientation \footnote{an orientation on a graph is a function
which assigns to each vertex $v$ of the graph a cyclic ordering of the edges emanating
from $v$.} $\mathcal{O}$. On the (finite) set of all possible pairs 
$(\mathcal{G}_n, \mathcal{O})$, one can put a probability measure (which is not the uniform measure) introduced first by Bollob\'as \cite{Boll1}, see \cite[Section 5]{BM}
for a summary on this construction which allows tractable computations in the large $n$ regime.
By glueing $2n$ ideal hyperbolic triangles according to $(\mathcal{G}_n, \mathcal{O})$ in such a way that the feet of the altitudes in adjacent triangles match up, 
one then obtains a random finite area hyperbolic surface
$S_n^O:=S^O(\mathcal{G}_n, \mathcal{O})$ with $\mathrm{Vol}(S_n)=2\pi n$. It is possible to show, see \cite{BM2},  that all surfaces in $S_n^O$ are actually (non ramified) covers of the modular
surface $\mathrm{PSL}_2(\Z) \backslash \H$.

 One can then conformally compactify $S_n^O$ by cutting cusps and filling them with discs. Provided that $S_n^O$ has genus at least $2$, we then denote by $S_n^C$ the unique hyperbolic surface in the conformal class of this compactification, see \cite[Section 3]{BM}. In $\S 4$ of the same paper they also show that there exists a constant $C_0>0$ such that with high probability as $n\rightarrow +\infty$, 
$$\mathrm{Vol}(S_n^C)\geq C_0 n.$$
Most of the geometric properties of $S_n^O$ (and then $S_n^C$, after a mild loss) can be read off from the combinatorics of $\mathcal{G}_n$. 

Another more recent discrete model of random surfaces is the so-called {\bf random cover model} which has been studied and used recently in \cite{MP,MNP,N1}.
In what follows, we fix a compact surface
$X=\Gamma \backslash \H$, "the base surface". 
Let $\phi_n:\Gamma \rightarrow \mathcal{S}_n$ be a group homomorphism, where $\mathcal{S}_n$ is  the symmetric group of permutations of $[n]:=\{1,\ldots,n\}$. The discrete group $\Gamma$ acts on $\H\times [n]$ by
$$\gamma.(z,j):=(\gamma(z), \phi_n(\gamma)(j)). $$
The resulting quotient $X_n:=\Gamma \backslash (\H\times [n])$ is then a finite cover of degree $n$ of $X$, possibly not connected. By considering the (finite) space of all homomorphism $\phi_n:\Gamma\rightarrow \mathcal{S}_n$, endowed with the uniform probability measure, we obtain a notion of random covering surfaces of degree $n$, $X_n\rightarrow X$. 
Let us remark that we can also view (up to isometry) the random cover $X_n$ as 
$$X_n=\bigsqcup_{k=1}^p \Gamma_k\backslash \H,$$
where each $\Gamma_k$ is the (a priori non-normal) subgroup of $\Gamma$ given by 
$$\Gamma_k=\mathrm{Stab}_\Gamma(i_k)=\{ \gamma \in \Gamma\ :\ \phi_n(\gamma)(i_k)=i_k \},$$ where $i_1,\ldots,i_p \in [n]$ are representatives of the orbits of $\Gamma$ (acting on $[n]$ via $\phi_n$). In general, the cover $X_n$ is not connected, but it follows directly from \cite{LS1} that the probability that this cover is connected  tends to $1$ as $n$ goes to infinity. In this model, we have $\mathrm{Vol}(X_n)=n\mathrm{Vol}(X)$.

A smooth model of random hyperbolic surfaces is given by the {\bf moduli space} $\mathscr{M}_g$ of closed hyperbolic surfaces
with genus $g$, up to isometry.  It is often defined as the quotient
$$\mathscr{M}_g=\mathscr{T}_g/\mathrm{MCG}, $$
where $\mathscr{T}_g$ is the Teichm\"uller space of hyperbolic metrics on a surface $S$ of genus $g$ and 
$$\mathrm{MCG}=\mathrm{Diff}(S)/\mathrm{Diff}_0(S)$$
is the group of isotopy classes of diffeomorphisms on $S$, aka the mapping class group. We refer the reader for example to \cite{Buser}, chapter 6 for more details. A symplectic form $\omega_{WP}$
lives naturally on $\mathscr{T}_g$ and descends to the moduli space, endowing it with a natural notion of volume, Weil--Petersson volume.
The moduli space is a non-compact finite dimensional orbifold, but as a consequence of Bers' theorem on pants decomposition, see \cite[Theorem 5.1.2]{Buser} it has a finite volume with respect to this Weil--Petersson volume. We can therefore normalize this measure and obtain a probability measure on
$\mathscr{M}_g$. Notice that in this case if $X\in \mathscr{M}_g$, $\mathrm{Vol}(X)=4\pi(g-1)$ by Gauss--Bonnet. The calculation of Weil--Petersson volumes of 
the moduli space by Mirzakhani \cite{Mirza} has made possible \cite{Mir1} the large genus asymptotic analysis of various geometric and spectral quantities, see for example \cite{Monk1, WX,WL} for recent works in that direction.

For all of the previous models, we will denote by ${\mathbb P}$ the associated probability measure, which depends either on $n$ or $g$, which are both proportional to the volume. We say that an event $\mathcal{A}$ is asymptoticaly almost sure (a.a.s.), or holds with high probability, if $\mathbb{P}(\mathcal{A})$ tends to $1$ as the volume of surfaces tends to infinity. The expectation of any relevant random variable will also be denoted by $\mathbb{E}$.

\subsection{Main result}
\begin{thm}
\label{main1}
There exists a universal constant $E>0$ such that for all the above models of random surfaces, for all $\epsilon>0$ we have
$$\frac{\log\det(\Delta_{X})}{\mathrm{Vol}(X)} \in [E-\epsilon,E+\epsilon],$$
a.a.s as $\mathrm{Vol}(X)\rightarrow +\infty.$
\end{thm}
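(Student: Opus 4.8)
The plan is to reduce the determinant to a spectrally explicit quantity whose leading behavior is governed by the volume plus a manageable remainder. Concretely, I would start from the Selberg trace formula / Cramér-type decomposition for $\log\det(\Delta_X)$ into a ``volume term'' and a ``spectral correction.'' After meromorphic continuation, one obtains an identity of the schematic form
\begin{equation*}
\log\det(\Delta_X) = c\,\mathrm{Vol}(X) + \Xi(X),
\end{equation*}
where $c$ comes from the heat kernel's small-time expansion (the $a_1$ Minakshisundaram--Pleijel coefficient integrated against the regularization), and $\Xi(X)$ collects contributions of the small eigenvalues together with a geometric (closed-geodesic) part. The universal constant $E$ should be $c$ plus the expected value of the normalized correction; the whole content of the theorem is that $\Xi(X)/\mathrm{Vol}(X)\to 0$ a.a.s.

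The key steps, in order, are as follows. First I would split $\zeta_X'(0)$ using the heat trace: write $\mathrm{Tr}(e^{-t\Delta_X})-1 = \sum_{\lambda_j\le \delta} e^{-t\lambda_j} - 1 + \sum_{\lambda_j>\delta} e^{-t\lambda_j}$, handle $t$ large and $t$ small separately, and isolate the polynomial-in-$\mathrm{Vol}$ divergence from the $t\to 0$ end using Chavel's computation. Second, for the high-spectrum part I would invoke Weyl's law with error terms that are uniform in the surface (this is where randomness enters only weakly — Weyl's law is deterministic, but the error constants must not depend on genus), so that this part contributes $E\,\mathrm{Vol}(X) + o(\mathrm{Vol}(X))$. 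Third, and crucially, I would control the small-eigenvalue contribution: one needs that the number of eigenvalues of $\Delta_X$ below a fixed threshold $\delta$ is $o(\mathrm{Vol}(X))$ a.a.s., and that these eigenvalues are not too close to zero (so their $\log$ and their contribution to $-\zeta_X'(0)$ stay $o(\mathrm{Vol})$). For each model this follows from known results: a uniform spectral gap or, weaker but sufficient, a bound $N(\delta, X) = o(\mathrm{Vol}(X))$ with high probability — available from Mirzakhani--Petri-type counting and Otal--Rosas-type bounds in the Weil--Petersson case, from the random cover spectral results (e.g. \cite{MNP,N1}) and from the graph-combinatorics estimates of \cite{BM} in the Brooks--Makover case. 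Fourth, I would bound the closed-geodesic side of the trace formula: the relevant geometric sum over primitive geodesics of length $\ell$ is controlled once $X$ has few short geodesics, i.e. the systole or at least a ``most of the surface has large injectivity radius'' statement holds a.a.s. for each model; this is again in the literature for all three models.

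I expect the main obstacle to be step three combined with the interchange of limits: one must show that the contribution of the bottom of the spectrum to $-\zeta_X'(0)$ — not merely to the heat trace at a fixed time — is genuinely $o(\mathrm{Vol}(X))$, which requires ruling out a conspiracy of polynomially-many eigenvalues accumulating near $0$ fast enough to produce a large $\sum \log(1/\lambda_j)$-type term after regularization. The safe route is to prove a quantitative statement: a.a.s. there are at most $\eta\,\mathrm{Vol}(X)$ eigenvalues in $(0,1/4]$ for any fixed $\eta$ (using the known eigenvalue-counting bounds, which give $o(\mathrm{Vol})$), and simultaneously $\lambda_1 \ge c_0/\mathrm{Vol}(X)^{A}$ for some fixed $A$ (a crude polynomial lower bound on the spectral gap, which follows from a diameter bound — the diameter is $O(\log \mathrm{Vol})$ a.a.s. in all these models — via Cheeger/Buser-type inequalities). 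Then each such eigenvalue contributes at most $O(\log \mathrm{Vol})$ to the regularized determinant, and the total is $O(\mathrm{Vol}\cdot(\log\mathrm{Vol})/\mathrm{Vol})\cdot\eta$-type, i.e. $o(\mathrm{Vol}(X))$ after sending $\eta\to 0$. Packaging all model-dependent inputs into a single clean hypothesis — ``few short geodesics and a polynomial spectral gap, a.a.s.'' — and then proving the theorem once from that hypothesis (this is presumably the role of \verb|main2| referenced in the introduction) is the cleanest organization, and I would structure the proof that way, verifying the hypothesis model-by-model at the end.
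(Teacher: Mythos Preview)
Your overall architecture matches the paper's: prove an abstract deterministic statement (this is Theorem~\ref{main2}) under clean hypotheses on the spectrum and on short closed geodesics, then verify those hypotheses a.a.s.\ for each model. The decomposition you sketch is also essentially the paper's, obtained from the exact heat trace formula: $\log\det\Delta_X = E\,\mathrm{Vol}(X)+\gamma_0-\int_0^1 t^{-1}S_X(t)\,dt-\int_1^\infty t^{-1}(S_X(t)-1)\,dt$, with $S_X$ the closed-geodesic sum. One correction: this is an \emph{exact} identity coming from the explicit heat kernel on $\H$, so your step~2 should not invoke ``Weyl's law with uniform error terms'' --- the volume term $E\,\mathrm{Vol}(X)$ is extracted exactly, not asymptotically, and no uniform Weyl remainder is needed (or available in the form you suggest).

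The real gap is in your step~3. With only a polynomial lower bound $\lambda_1\ge c_0\,\mathrm{Vol}^{-A}$ together with ``at most $\eta\,\mathrm{Vol}$ eigenvalues in $(0,1/4]$ for each fixed $\eta$'', the small-eigenvalue contribution is bounded by $\eta\,\mathrm{Vol}\cdot O(\log\mathrm{Vol})$, and after normalizing by $\mathrm{Vol}$ you get $O(\eta\log\mathrm{Vol})$. This does \emph{not} tend to $0$ as $\mathrm{Vol}\to\infty$ for fixed $\eta$, and you cannot send $\eta\to 0$ simultaneously with $\mathrm{Vol}\to\infty$ without a quantitative rate on the eigenvalue count, which you have not supplied. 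The paper sidesteps this entirely by taking as hypothesis $\hh_1(\eta)$ a \emph{uniform} spectral gap $\lambda_1\ge\eta>0$ --- which is in fact known a.a.s.\ for all three models (\cite{MNP} for random covers, \cite{BM} for Brooks--Makover, \cite{Mir1,WX,WL,AM} for Weil--Petersson). Under $\hh_1(\eta)$ there are no eigenvalues in $(0,\eta)$ at all, and one obtains (Lemma~\ref{keylemma}) $|S_X(t)-1|\le C_2\,\mathrm{Vol}(X)e^{-\eta_0 t}$ for $t\ge 1$, which handles the large-$t$ integral directly. The small-$t$ integral is then controlled purely on the geodesic side via the counting hypothesis $\hh_2$: $N_X(L)\le C\,\mathrm{Vol}(X)^\alpha$ with $\alpha<1$, which is the precise quantitative form your step~4 needs.
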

The constant $E$ is actually explicit and is approximately $0.0538$, see $\S2$ for an exact description. This result shows that exponential growth of the determinant is typical when the genus goes to infinity. This low dimensional result is consistent, in a much simpler setting, 
with the conjectures on the exponential growth of the analytic torsion and the torsion homology for higher dimensional hyperbolic manifolds, see for example
the paper of Bergeron--Venkatesh \cite{BV} and references therein. 

Remark that the above statement says that the random variable $ \frac{\log\det(\Delta_{X})}{\mathrm{Vol}(X)}$ converges in probability to the constant $E$. What about other modes of convergence? 
It is possible to derive from Theorem \ref{main1} a convergence result
for the expectation of $\vert \log \det(\Delta)\vert^\beta$, see section $\S 5$, Theorem \ref{limit1}: for both models of random covers and Weil--Petersson, we show the existence of exponents $\beta>0$ such that
$$\lim_{\mathrm{Vol}(X)\rightarrow \infty} \mathbb{E}\left(\frac{\vert \log \det(\Delta_{X})\vert^\beta}{\mathrm{Vol}(X)^\beta} \right)=E^\beta.$$ 

The paper is organized as follows. In $\S 2$ we recall how one establishes an identity for $\log\det(\Delta_X)$ which involves infinite sums over closed geodesics via the Heat trace formula.
In $\S 3$ we prove an abstract Theorem which guarantees the exponential growth of determinants as long as a certain natural list of assumptions are satisfied. 
 These hypotheses turn out to be valid a.a.s. for the probabilistic models listed above, and this is established in $\S 4$. In $\S 5$, we derive from Theorem \ref{main1} a convergence result
 for the expectation, based on some moments estimates for the systole and the smallest positive eigenvalue.
 
 \noindent {\bf Acknowledgement.} It is a pleasure to thank my neighbor Bram Petri for several discussions around this work. Thanks to Yuhao Xue and an anonymous referee for pointing out an improvement of Theorem \ref{main2}. I also thank Zeev Rudnick for his reading and comments. Finally, Yunhui Wu and Yuxin He have recently
 shown to me that Theorem \ref{limit1} can also be refined in the Weil--Petterson case, see in $\S 5$ for details.

\section{Heat kernels and determinants}
In this section, we recall some standard calculations on regularized determinants mostly taken from \cite[Appendix B]{BS}. Our goal is to show how the heat trace formula allows
to derive an identity for $\log \det \Delta_X$.

On the hyperbolic plane $\H$, the heat kernel $p_t(x,y)$ (see for example \cite{Buser} chapter 7) has an explicit formula given by
$$p_t(x,y)=\frac{\sqrt{2}e^{-t/4}}{(4\pi t)^{3/2}}\int_{d(x,y)}^\infty \frac{re^{-r^2/4t}dr}{\sqrt{\cosh r-\cosh d(x,y)}},$$
where $d(x,y)$ denotes the hyperbolic distance in $\H$. On the quotient $X=\Gamma \backslash \H$, we can recover the heat kernel by summing over the group, i.e.,
$$h_t^X(x,y)=\sum_{\gamma \in \Gamma} p_t(x,\gamma y).$$
Convergence of the above series on any compact subset of $\H$ is guaranteed by the lattice counting bound
\begin{equation}
\label{Lattice}
N_\Gamma(x,y,T):=\#\{ \gamma \in \Gamma \ :\ d(\gamma x,y)\leq T\}=O(e^T),
\end{equation}
which is standard and follows from a basic volume argument.
The semi-group of operators $e^{-t\Delta_X}$ is then of trace class and one has the explicit ``heat trace formula" 
\begin{equation}
\label{heatrace1}
\mathrm{Tr}(e^{-t\Delta_X})=\sum_j e^{-t \lambda_j}=\mathrm{Vol(X)} \frac{e^{-t/4}}{(4\pi t)^{3/2}}\int_0^\infty \frac{re^{-r^2/4t}}{\sinh(r/2)}dr \\
\end{equation}
$$
+\frac{e^{-t/4}}{(4\pi t)^{1/2}}\sum_{k\geq 1}\sum_{\gamma \in \mathcal{P}} \frac{\ell(\gamma)}{2\sinh(k\ell(\gamma)/2)}e^{-(k\ell(\gamma))^2/4t},$$
where $\mathcal{P}$ stands for the set of primitive conjugacy classes in $\Gamma$ (i.e. oriented primitive closed geodesics on $X$) and if $\gamma \in \mathcal{P}$,
$\ell(\gamma)$ is the length. For more details on the calculation of this trace and more generally Selberg's formula, see \cite{Hejhal1,Buser}. Setting 
$$S_X(t):= \frac{e^{-t/4}}{(4\pi t)^{1/2}}\sum_{k\geq 1}\sum_{\gamma \in \mathcal{P}} \frac{\ell(\gamma)}{2\sinh(k\ell(\gamma)/2)}e^{-(k\ell(\gamma))^2/4t},$$
it is easy to see from the spectral side of the trace formula that $\vert S_X(t)-1\vert$ is exponentially small as $t\rightarrow +\infty$. We also observe that $S_X(t)$ is exponentially small as $t\rightarrow 0$. We now write (for $\Re(s)$ large)
$$\zeta_X(s)=\frac{1}{\Gamma(s)} \int_0^\infty t^{s-1} (\mathrm{Tr}(e^{-t\Delta_X})-1)dt= \zeta^{(1)}_X(s)+\zeta^{(2)}_X(s),$$
where we have set
$$\zeta^{(1)}_X(s)=:\mathrm{Vol(X)}\frac{1}{4\pi \Gamma(s)} \int_0^\infty t^{s-1} \frac{e^{-t/4}}{\sqrt{4\pi} t^{3/2}}\int_0^\infty \frac{re^{-r^2/4t}}{\sinh(r/2)}drdt, $$
and
$$\zeta^{(2)}_X(s)=:\frac{1}{\Gamma(s)} \int_0^\infty t^{s-1} (S_X(t)-1)dt.$$
Writing (for $\Re(s)$ large)
$$\zeta^{(2)}_X(s)=\frac{1}{\Gamma(s)} \int_0^1 t^{s-1} (S_X(t)-1)dt+\frac{1}{\Gamma(s)} \int_1^\infty t^{s-1} (S_X(t)-1)dt$$
$$=\frac{-1}{\Gamma(s+1)}+\frac{1}{\Gamma(s)} \int_0^1 t^{s-1}S_X(t)dt+\frac{1}{\Gamma(s)} \int_1^\infty t^{s-1} (S_X(t)-1)dt,$$
we notice that the last two integrals make sense for all $s\in \C$.
Therefore $\zeta^{(2)}_X(s)$ has an analytic extension to $\C$ and using elementary facts on the gamma function (in particular that it has a simple pole at $s=0$ with residue $1$), 
we have that 
$$-\left. \frac{d}{ds}\right \vert_{s=0} \zeta^{(2)}_X(s)=-\Gamma'(1)-\int_0^1 \frac{S_X(t)}{t}dt-\int_1^\infty \frac{(S_X(t)-1)}{t}dt.$$
On the other hand, for large $\Re(s)$ we use that \footnote{for example one can use the identity valid for $x\in \R$
$$\tanh(\pi x)=\frac{1}{\pi} \sum_{k=0}^\infty \frac{2x}{x^2+(k+1/2)^2}=\frac{1}{\pi}\int_0^\infty\frac{\sin(ux)}{\sinh(u/2)}du.$$}
$$\frac{e^{-t/4}}{\sqrt{4\pi} t^{3/2}}\int_0^\infty \frac{re^{-r^2/4t}}{\sinh(r/2)}dr=\int_{-\infty}^{+\infty} x\tanh(\pi x)e^{-(x^2+1/4)t}dx, $$
see \cite[Page 593]{BS}, and one can compute the Mellin transform to obtain (again for $\Re(s)$ large)
$$\zeta^{(1)}_X(s)=\frac{2\mathrm{Vol}(X)}{4\pi} \int_0^\infty \frac{u\tanh(\pi u)}{(u^2+1/4)^s}du.$$
This function can be analytically continued to $s=0$, see for example \cite[Appendix B]{BS}, and the value can be actually computed as
$$-\left. \frac{d}{ds}\right \vert_{s=0} \zeta^{(1)}_X(s)=\frac{\mathrm{Vol}(X)}{4\pi}\left ( 4\zeta'(-1)-1/2+\log(2\pi)\right):=\mathrm{Vol(X)}E,$$
with $\zeta'(-1)=1/12-\log(A)$ and $A$ is the so-called {\it Glaisher--Kinkelin} constant, which is for example defined by
$$A=\lim_{n\rightarrow \infty} \frac{\prod_{k=1}^n k^k}{e^{-n^2/4}n^{n^2/2+n/2+1/12}}.$$
We have $E\approx 0,0538$. Using in addition that $\Gamma'(1)=-\gamma_0$, where 
$$\gamma_0=\lim_{n\rightarrow +\infty} \sum_{k=1}^n \frac{1}{k}-\log(n),$$
is the {\it Euler constant}, we have obtained the celebrated identity
\begin{equation}
\label{detformula}
\log\det \Delta_X=\mathrm{Vol}(X)E+\gamma_0-\int_0^1 \frac{S_X(t)}{t}dt-\int_1^\infty \frac{(S_X(t)-1)}{t}dt.
\end{equation}
This formula can be interpreted multiplicatively via Selberg zeta function at $s=1$, see \cite{Sarnak, BS, DP}.
\section{An abstract deterministic statement}
Theorem \ref{main1} actually follows from a more general deterministic result for sequences of compact surfaces satisfying certain hypotheses denoted by $\hh_1$ and $\hh_2$.
More precisely, if $(X_k)$ is a sequence of compact connected hyperbolic surfaces with $\mathrm{Vol}(X_k)\rightarrow +\infty$, let $\mathcal{P}_k$ denote the set of oriented primitive closed geodesics on $X_k$, and let
$\Delta_k$ be the hyperbolic Laplacian on $X_k$. We also denote by $\ell_0(X_k)$ the length of the shortest closed geodesic on $X_k$. Let $C>0$, $\eta>0$, $L>0$ and $0<\alpha<1/2$  be some constants.

We say that the sequence $(X_k)$ satisfies hypothesis $\hh_1(\eta)$ if for all $k \in \N$ we have 
\begin{equation}
\label{hyp1}
\lambda_1(\Delta_k)\geq \eta.
\end{equation}
We say that the sequence $(X_k)$ satisfies hypothesis $\hh_2(C,L,\alpha)$ if for all $k \in \N$ we have the following
 bound on the {\it number of closed geodesics}:  
 \begin{equation}
 \label{hyp2}
N_k(L):=N_{X_k}(L):=\#\{ (\gamma,m) \in \mathcal{P}_k\times \N\ \ : \  m\ell(\gamma)\leq L \} \leq  C\mathrm{Vol}(X_k)^{\alpha}.
\end{equation}
In this paper $\N=\{1,2,\ldots\}$ is the set of natural integers starting at $1$. We point out that exponential growth of Laplace determinants is established in the literature for families of covers for which a uniform spectral gap holds and the injectivity radius of the manifolds goes to infinity, see for example \cite{BV} and \cite{B1}. Typical examples are congruence covers of arithmetic hyperbolic manifolds and Laplacians twisted by a "strongly acyclic" representation which ensures a uniform spectral gap. 
In random models of surfaces, having the injectivity radius grow to infinity is {\it atypical} and we establish the result under the weaker assumption
of small growth of the number of closed geodesics with bounded length.

Theorem \ref{main1} will follow from the following deterministic result.
\begin{thm}
\label{main2}
Fix some $\eta>0$ and $0<\alpha<1$. Assume that $(X_k)$ satisfies $\hh_1(\eta)$ and $\hh_2(C_0,L_0,\alpha)$ with $L_0=2 \mathrm{arcsinh}(1)$ for some $C_0>0$.
Then for all $\epsilon>0$, there exists $L_\epsilon>0$  such that if in addition $(X_k)$ satisfies $\hh_2(C_\epsilon,L_\epsilon,\alpha)$ for some $C_\epsilon>0$,  then uniformly for all $\mathrm{Vol}(X_k)$ large,
$$\frac{\log\det(\Delta_{X_k})}{\mathrm{Vol}(X_k)} \in [E-\epsilon,E+\epsilon],$$
where $E>0$ is the universal constant from above.
\end{thm}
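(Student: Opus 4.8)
The plan is to argue from the exact identity~\eqref{detformula}. Dividing it by $\mathrm{Vol}(X_k)$ and absorbing the constant $\gamma_0$ into the error, it suffices to prove that for every $\epsilon>0$ there is a length $L_\epsilon$ so that, whenever $\hh_1(\eta)$, $\hh_2(C_0,L_0,\alpha)$ and $\hh_2(C_\epsilon,L_\epsilon,\alpha)$ hold, one has for all $k$ with $\mathrm{Vol}(X_k)$ large
\[
\int_0^1\frac{S_{X_k}(t)}{t}\,dt+\Big|\int_1^\infty\frac{S_{X_k}(t)-1}{t}\,dt\Big|\ \le\ \epsilon\,\mathrm{Vol}(X_k)
\]
(the first integrand is nonnegative). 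Two uniform inputs are used throughout: the on‑diagonal heat trace bound $\mathrm{Tr}(e^{-t\Delta_k})\le C\,\mathrm{Vol}(X_k)$ for $t\ge1$, valid on any closed hyperbolic surface, and the resulting uniform prime geodesic bound $\pi_k(L):=\#\{\gamma\in\mathcal P_k:\ell(\gamma)\le L\}\le C\,\mathrm{Vol}(X_k)\,e^{L}$, which one extracts from the heat trace formula~\eqref{heatrace1} by testing it at times $t\asymp L$ on dyadic length ranges (alternatively, under $\hh_1(\eta)$, from the prime geodesic theorem). We also record that $\hh_2(C_0,L_0,\alpha)$ forces the uniform systole lower bound $\ell_0(X_k)\ge L_0/(C_0\mathrm{Vol}(X_k)^\alpha+1)$, because the iterates of the shortest closed geodesic already contribute $\lfloor L_0/\ell_0(X_k)\rfloor$ to $N_k(L_0)$.

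For the small‑time integral I would expand $S_{X_k}$ over closed geodesics and interchange sum and integral (all terms nonnegative), obtaining
\[
\int_0^1\frac{S_{X_k}(t)}{t}\,dt=\sum_{\gamma\in\mathcal P_k}\sum_{m\ge1}\frac{\ell(\gamma)}{2\sinh\!\big(m\ell(\gamma)/2\big)}\,I\big(m\ell(\gamma)\big),\qquad I(a):=\int_0^1\frac{e^{-t/4}}{\sqrt{4\pi}\,t^{3/2}}\,e^{-a^2/4t}\,dt .
\]
A change of variables gives $I(a)\le 1/a$ for every $a>0$ and $I(a)\le e^{-a^2/4}$ for $a\ge2$; combined with the elementary inequality $\frac{x}{2\sinh(x/2)}\le1$, the $(\gamma,m)$‑term is at most $\tfrac{1}{m^2\ell(\gamma)}$, so the sum over the powers $m$ of a fixed primitive $\gamma$ costs only $\lesssim\ell(\gamma)^{-1}$, and, using the Gaussian bound for $m\ell(\gamma)\ge2$, even $\lesssim\ell(\gamma)^{-1}e^{-\ell(\gamma)^2/16}$. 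Splitting the remaining sum over primitive geodesics at $L_\epsilon$: the part with $\ell(\gamma)\le L_\epsilon$ satisfies $\sum_{\ell(\gamma)\le L_\epsilon}\ell(\gamma)^{-1}\le\tfrac{2}{L_\epsilon}N_k(L_\epsilon)\le\tfrac{2C_\epsilon}{L_\epsilon}\mathrm{Vol}(X_k)^\alpha=o(\mathrm{Vol}(X_k))$ by $\hh_2(C_\epsilon,L_\epsilon,\alpha)$ and $\alpha<1$; the part with $\ell(\gamma)>L_\epsilon$ is $\le\tfrac{1}{L_\epsilon}\int_{L_\epsilon}^\infty e^{-L^2/16}\,d\pi_k(L)\lesssim\tfrac{\mathrm{Vol}(X_k)}{L_\epsilon}e^{-L_\epsilon^2/32}$ after an integration by parts and the bound on $\pi_k$, hence $\le\tfrac{\epsilon}{3}\mathrm{Vol}(X_k)$ once $L_\epsilon$ is large.

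For the large‑time integral I would split $\int_1^\infty=\int_1^{T_\epsilon}+\int_{T_\epsilon}^\infty$ with $T_\epsilon$ to be chosen. On $[T_\epsilon,\infty)$ use the spectral side $|S_{X_k}(t)-1|\le\sum_{j\ge1}e^{-t\lambda_j}+\mathrm{Vol}(X_k)\,v(t)$, where $\mathrm{Vol}(X_k)\,v(t)$ is the identity (Weyl) term of~\eqref{heatrace1}, which is $\lesssim\mathrm{Vol}(X_k)e^{-t/4}$ for $t\ge1$: this gives $\int_{T_\epsilon}^\infty\tfrac{\mathrm{Vol}(X_k)v(t)}{t}\,dt\lesssim\mathrm{Vol}(X_k)e^{-T_\epsilon/4}$ and $\int_{T_\epsilon}^\infty t^{-1}\sum_{j\ge1}e^{-t\lambda_j}\,dt\le\sum_{j\ge1}e^{-T_\epsilon\lambda_j}\le e^{-(T_\epsilon-1)\eta}\big(\mathrm{Tr}(e^{-\Delta_k})-1\big)\lesssim e^{-(T_\epsilon-1)\eta}\mathrm{Vol}(X_k)$, using $\hh_1(\eta)$ and the heat trace bound; both are $\le\tfrac{\epsilon}{6}\mathrm{Vol}(X_k)$ once $T_\epsilon$ is large relative to $\eta,\epsilon$. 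On the now‑fixed range $[1,T_\epsilon]$, $\int_1^{T_\epsilon}\tfrac{|S_{X_k}(t)-1|}{t}\,dt\le\log T_\epsilon+\int_1^{T_\epsilon}\tfrac{S_{X_k}(t)}{t}\,dt$, and since $t\le T_\epsilon$ one may replace $e^{-(m\ell(\gamma))^2/4t}$ by $e^{-(m\ell(\gamma))^2/4T_\epsilon}$ in the geodesic sum for $S_{X_k}(t)$; splitting that sum at $m\ell(\gamma)\le L_\epsilon$ (bounded by $N_k(L_\epsilon)\le C_\epsilon\mathrm{Vol}(X_k)^\alpha$) and $m\ell(\gamma)>L_\epsilon$ (there $e^{-(m\ell(\gamma))^2/4T_\epsilon}\le e^{-L_\epsilon m\ell(\gamma)/4T_\epsilon}$, and summing over powers and then using $\pi_k(L)\lesssim\mathrm{Vol}(X_k)e^{L}$ controls the tail by $\lesssim\mathrm{Vol}(X_k)\,T_\epsilon/L_\epsilon$) shows this contributes $\le\tfrac{\epsilon}{3}\mathrm{Vol}(X_k)+o(\mathrm{Vol}(X_k))$ provided $L_\epsilon$ is large compared with $T_\epsilon$. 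Choosing first $T_\epsilon$, then $L_\epsilon\ge\max(L_0,T_\epsilon^2)$, and assuming $\hh_2(C_\epsilon,L_\epsilon,\alpha)$, all pieces are $\le\tfrac{\epsilon}{3}\mathrm{Vol}(X_k)$ for $\mathrm{Vol}(X_k)$ large, which proves the inequality above and hence the theorem.

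The step I expect to be the main obstacle is the short‑geodesic regime of the small‑time integral. A crude bound --- a geodesic of length $\ell$ contributing $\sim1/\ell\le1/\ell_0(X_k)\sim\mathrm{Vol}(X_k)^\alpha$, with up to $\sim\mathrm{Vol}(X_k)^\alpha$ short geodesics --- produces a spurious $\mathrm{Vol}(X_k)^{2\alpha}$ term, which is $o(\mathrm{Vol}(X_k))$ only for $\alpha<1/2$. What allows $\alpha<1$ is, first, that summing the \emph{powers} of a single short geodesic costs only $\lesssim1/\ell$ (the $1/m^2$ decay), and, second, that $\hh_2$ counts closed geodesics \emph{with multiplicity}, so that $\sum_{\ell(\gamma)\le L_\epsilon}1/\ell(\gamma)$ is genuinely $O(N_k(L_\epsilon)/L_\epsilon)$ and not $O(\pi_k(L_\epsilon)/\ell_0(X_k))$. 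The rest is bookkeeping to keep every constant independent of $k$, the only external ingredient being the uniform heat‑trace and prime‑geodesic upper bounds recalled above.
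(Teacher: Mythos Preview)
Your overall strategy matches the paper's almost step for step: start from identity~\eqref{detformula}, treat the integrals over $[0,1]$ and $[1,\infty)$ separately, use the spectral gap $\hh_1(\eta)$ for the large-time tail $[T_\epsilon,\infty)$, and control all geodesic sums by splitting at a length threshold $L_\epsilon$, using $\hh_2(C_\epsilon,L_\epsilon,\alpha)$ for the short part and a uniform counting bound plus integration by parts for the tail. Your function $I(a)$ is the paper's $G(u)$, your $1/(m^2\ell(\gamma))$ bound and the crucial inequality $\sum_{\ell(\gamma)\le L}\ell(\gamma)^{-1}\le \tfrac{2}{L}N_k(L)$ are exactly the paper's equation~\eqref{small_sums}, and your observation that this is what upgrades the argument from $\alpha<1/2$ to $\alpha<1$ is correct and is the key point.

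There is, however, one genuine gap in the justification of your ``uniform inputs''. The claim that $\mathrm{Tr}(e^{-t\Delta_k})\le C\,\mathrm{Vol}(X_k)$ for $t\ge1$ is ``valid on any closed hyperbolic surface'' is false: a single primitive geodesic of length $\ell\to0$ together with its iterates contributes $\sum_{m\le 1/\ell}\tfrac{1}{m}\asymp\log(1/\ell)$ to $S_X(1)$, so on degenerating surfaces $\mathrm{Tr}(e^{-\Delta_X})$ is \emph{not} $O(\mathrm{Vol}(X))$ uniformly. Consequently, deriving the counting bound from the heat trace is circular, and the prime geodesic theorem does not give uniform constants either. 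The paper supplies this input as Lemma~\ref{count1}: one invokes Buser's bound that the number of closed geodesics of length $\le T$ which are \emph{not} iterates of short primitives is at most $(g-1)e^{T+6}$, and then uses $\hh_2(C_0,L_0,\alpha)$ at the specific scale $L_0=2\,\mathrm{arcsinh}(1)$ to control the iterates of short primitives. This yields $N_k(T)\le C_1\,\mathrm{Vol}(X_k)e^T$ uniformly, from which the heat trace bound (the paper's Lemma~\ref{keylemma}) follows. This is precisely why the theorem singles out $L_0=2\,\mathrm{arcsinh}(1)$; once you plug in Buser's lemma here, the remainder of your argument goes through.
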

All the positive constants denoted by $C_1,C_2,\ldots,C_j$ below depend only on $C_0,L_0$. Before we give a proof of Theorem \ref{main2}, we will need a preliminary Lemma which is needed to control uniformly sums over closed geodesics.

\begin{lem}
\label{count1}
Under hypothesis $\hh_2(C_0,L_0,\alpha)$ where 
$L_0=2 \mathrm{arcsinh}(1),$
 there exists $C_1>0$ such that for all $k$ and all $T\geq 0$,
$$N_k(T)\leq C_1 \mathrm{Vol}(X_k)e^{T}.$$
\end{lem}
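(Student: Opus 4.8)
The plan is to bootstrap from the hypothesis $\hh_2(C_0,L_0,\alpha)$, which controls $N_k(L_0)$ on a single fixed scale, to a bound valid at all scales $T$ by a dyadic (or unit-step) decomposition of the length axis, combining it with the universal lattice-type counting bound \eqref{Lattice} available on each individual surface. First I would observe that on any fixed compact hyperbolic surface $X_k$ the prime geodesic counting function grows like $e^T/T$ by the prime geodesic theorem, or more crudely $N_k(T)=O(e^T)$ by the lattice point estimate \eqref{Lattice} applied at a point; the point of the lemma is to make the implied constant \emph{uniform} in $k$ up to the factor $\mathrm{Vol}(X_k)$. The standard mechanism is that a geodesic of length $\le T$ passing through (a neighbourhood of) a given point $x$ corresponds to a group element $\gamma$ with $d(x,\gamma x)\le T$, so $N_{\Gamma_k}(x,x,T)=O(e^T)$ with an \emph{absolute} implied constant coming purely from the volume of a ball in $\H$; integrating $x$ over a fundamental domain and using that each primitive geodesic and its iterates meet the domain in a controlled way produces a factor $\mathrm{Vol}(X_k)$.

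More concretely, the key steps in order are: (i) Fix $T\ge L_0$ and split $[0,T]$ into consecutive intervals of length $L_0$, so $N_k(T)=\sum_{j} \#\{(\gamma,m): jL_0< m\ell(\gamma)\le (j+1)L_0\}$ with at most $\lceil T/L_0\rceil$ terms. (ii) For the first block, $j=0$, the hypothesis $\hh_2(C_0,L_0,\alpha)$ gives $N_k(L_0)\le C_0\mathrm{Vol}(X_k)^\alpha$, which since $\alpha<1$ is certainly $\le C_0\mathrm{Vol}(X_k)$. (iii) For the higher blocks I would use a geometric collar/packing argument: a primitive closed geodesic of length in a fixed window has an embedded collar of definite area, so the number of \emph{primitive} geodesics of length $\le R$ is $O(\mathrm{Vol}(X_k) e^{R})$ — actually the cleaner route is to bound, for each fixed base point, the number of $\Gamma_k$-elements $\gamma$ with $d(x,\gamma x)\le R$ by $c\,e^{R}$ with $c$ absolute (volume of an $R$-ball divided by volume of an injectivity-radius ball is too crude, so instead integrate the counting function over a fundamental domain: $\int_{F_k} N_{\Gamma_k}(x,x,R)\,dx \le \mathrm{Vol}(B_{\H}(R)) = O(e^{R})\cdot\mathrm{const}$, and since the integrand is $\ge$ the number of geodesics of length $\le R$ that meet... ) — more carefully, each pair $(\gamma,m)$ with $m\ell(\gamma)\le R$ contributes, via the axis of $\gamma$, a set of points $x$ in a fundamental domain with $d(x,\gamma^m x)$ comparable to $m\ell(\gamma)$, of measure bounded below by a constant; so $N_k(R)\le c'\int_{F_k}N_{\Gamma_k}(x,x,R+O(1))\,dx\le c''\,\mathrm{Vol}(X_k)e^{R}$. (iv) Summing over the $O(T)$ dyadic blocks and absorbing the factor $T$ into the exponential (since $T e^{T-L_0}\le C_1' e^{T}$ after adjusting constants, using $L_0>0$ fixed) yields $N_k(T)\le C_1\mathrm{Vol}(X_k)e^{T}$; for $0\le T< L_0$ one uses monotonicity and step (ii). All implied constants depend only on $L_0$ (hence only on $C_0,L_0$), as required.

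The main obstacle is step (iii): making the passage from ``counting closed geodesics on $X_k$'' to ``counting lattice points in $\H$'' uniform, i.e. proving $N_k(R)\le c\,\mathrm{Vol}(X_k)e^{R}$ with $c$ absolute. The subtlety is that short geodesics (which random surfaces do have, unlike the congruence-cover case where injectivity radius $\to\infty$) force one to count \emph{iterates} $m\ell(\gamma)$ up to $R$, so the naive ``one geodesic $\leftrightarrow$ one collar of area $\ge \delta$'' bound would only control primitive lengths and lose the dependence on how small $\ell(\gamma)$ can be. The clean fix is to phase the argument entirely through the point-pair counting function $N_{\Gamma_k}(x,x,R)=\#\{\gamma:d(x,\gamma x)\le R\}$: for \emph{every} $x\in\H$ this is $\le \mathrm{Vol}\big(B_{\H}(R)\big)/v_0$ is again too lossy, so instead one notes $N_{\Gamma_k}(x,x,R)\le$ (number of $\Gamma_k$-translates of a fixed small ball $B(x,\rho)$ contained in $B(x,R+\rho)$) $\le \mathrm{Vol}(B_\H(R+\rho))/\mathrm{Vol}(B_\H(\rho))$ — but this has no $\mathrm{Vol}(X_k)$ and no $k$-dependence at all, \emph{that is the wrong normalization for what we want}; rather, we want an \emph{averaged} statement. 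So the correct move is: $\int_{F_k} N_{\Gamma_k}(x,x,R)\,dx=\int_{B_\H(R)}\#\{\text{sheets}\}\le \mathrm{Vol}(B_\H(R))\cdot(\text{bounded overlap})$, giving $\frac{1}{\mathrm{Vol}(X_k)}\int_{F_k}N_{\Gamma_k}(x,x,R)dx = O(e^R)$, and then relate $N_k(R)$ (closed-geodesic count) to this average by the collar/tube estimate: the set $\{x\in F_k: d(x,\gamma^m x)\le R\}$ has measure $\ge \delta(R)>0$ for each $(\gamma,m)$ with $m\ell(\gamma)\le R-1$, with $\delta(R)$ depending only on $R$ (not on $\ell(\gamma)$, because the tube around an axis has area bounded below independently of how thin it is, as long as its \emph{length} is $\ge$ const). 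Pinning down this last geometric fact — a uniform lower bound on the area swept in a fundamental domain by the action of $\gamma^m$ — is where the real work lies; it is elementary hyperbolic geometry (comparison of a geodesic tube of length $\ell$ and some fixed small radius $r$, whose area is $\ell\sinh r$) but needs to be stated carefully so that the constants genuinely depend only on $L_0$.
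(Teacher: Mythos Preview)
Your proposal has a genuine gap in step~(iii), which you correctly identify as the crux but whose key geometric claim is false. You assert that for every pair $(\gamma,m)$ with $m\ell(\gamma)\le R-1$ the set $\{x\in F_k: d(x,\gamma^m x)\le R\}$ has area bounded below by some $\delta(R)>0$ independent of $\ell(\gamma)$. This is not true. By the standard Selberg unfolding, the contribution of the full conjugacy class $[\gamma^m]$ to $\int_{F_k}N_{\Gamma_k}(x,x,R)\,dx$ equals the area of $\{y\in F_{\langle\gamma\rangle}: d(y,\gamma^m y)\le R\}$, where $F_{\langle\gamma\rangle}$ is a fundamental strip for the cyclic centralizer $\langle\gamma\rangle$; this strip has length $\ell(\gamma)$ along the axis, so the area in question is $2\ell(\gamma)\sinh\big(w(R,m\ell(\gamma))\big)$, which is proportional to the \emph{primitive} length $\ell(\gamma)$ and tends to $0$ as $\ell(\gamma)\to 0$ with $m\ell(\gamma)$ held fixed near $R-1$. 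Your parenthetical justification (``the tube around an axis has area bounded below independently of how thin it is'') conflates the collar-lemma tube of width $\operatorname{arcsinh}(1/\sinh(\ell/2))$, whose area is indeed $\asymp 1$, with the much narrower tube $\{d(\cdot,\gamma^m\cdot)\le R\}$ relevant here. Consequently step~(iii) cannot yield $N_k(R)\le c\,\mathrm{Vol}(X_k)e^R$ by this route; in fact, any such bound valid for \emph{all} compact hyperbolic surfaces is false without a hypothesis of type $\hh_2$, since a single primitive geodesic of length $\ell_0$ already contributes $\lfloor R/\ell_0\rfloor$ pairs, and $\ell_0$ is not controlled by $\mathrm{Vol}(X_k)$. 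This also makes the overall architecture circular: step~(iii) is essentially the statement of the lemma itself.

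The paper's proof is both shorter and structurally different. It does not decompose $[0,T]$ at all; instead it splits by primitive length. Buser's lemma (Lemma~6.6.4 in \cite{Buser}) gives directly that the number of closed geodesics of length $\le T$ whose primitive length exceeds $L_0=2\operatorname{arcsinh}(1)$ is at most $(g-1)e^{T+6}\ll \mathrm{Vol}(X_k)e^T$, with an absolute constant. The remaining pairs $(\gamma,m)$ have $\ell(\gamma)\le L_0$, and their number is at most $T\sum_{\ell(\gamma)\le L_0}\ell(\gamma)^{-1}$; the point is that this last sum is controlled by the hypothesis via
\[
\sum_{\ell(\gamma)\le L_0}\frac{1}{\ell(\gamma)}\ \le\ \frac{2}{L_0}\,N_k(L_0)\ \le\ \frac{2C_0}{L_0}\,\mathrm{Vol}(X_k)^\alpha,
\]
and since $\alpha<1$ the product $T\cdot\mathrm{Vol}(X_k)^\alpha$ is absorbed into $\mathrm{Vol}(X_k)e^T$. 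So the hypothesis $\hh_2$ is used precisely to bound the \emph{inverse-length sum over short primitives}, which is the quantity your tube argument cannot reach.
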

\begin{proof} A result of Buser (\cite{Buser} Lemma 6.6.4) says that for any compact connected hyperbolic surface of genus $g$, the number
of oriented closed geodesics with length $\leq T$ which are {\it not iterates of primitive closed geodesics of length $\leq 2 \mathrm{arcsinh}(1)$} is bounded from above by
$$(g-1)e^{T+6}.$$
Therefore we have
$$N_k(T)\leq \frac{\mathrm{Vol}(X_k)}{4\pi}e^{T+6}
+ \#\{ (\gamma,m) \in \mathcal{P}_k\times \N\ \ : \  m\ell(\gamma)\leq T \ \mathrm{and}\ \ell(\gamma)\leq 2 \mathrm{arcsinh}(1)\}.$$
On the other hand we have
$$\#\{ (\gamma,m) \in \mathcal{P}_k\times \N\ \ : \  m\ell(\gamma)\leq T \ \mathrm{and}\ \ell(\gamma)\leq 2 \mathrm{arcsinh}(1)\}
\leq \sum_{\ell(\gamma)\leq \mathrm{arcsinh}(1)} \frac{T}{\ell(\gamma)}.$$
We can observe that by definition of $N_{k}(L)$, we have
$$N_{k}(L)=\sum_{m\ell(\gamma)\leq L} 1=\sum_{\ell(\gamma)\leq L}  \left [ \frac{L}{\ell(\gamma)}  \right],$$
where $[.]$ is the integer part. Hence we can write
\begin{equation}
\label{small_sums}
\sum_{\ell(\gamma)\leq L} \frac{1}{\ell(\gamma)} \leq \frac{2}{L} N_{X_k}(L). 
\end{equation}
Going back to the estimate of $N_{X_k}(T)$ and using $\hh_2(C_0,L_0,\alpha)$ with $L_0=\mathrm{arcsinh}(1)$, we get
$$N_k(T)\leq \mathrm{Vol}(X_k)e^T\frac{e^6}{4\pi}+\frac{2}{\mathrm{arcsinh}(1)}N_{X_k}(\mathrm{arcsinh}(1))$$
$$\leq C_1\mathrm{Vol}(X_k)e^T$$
the proof is done. 
\end{proof}

\begin{lem}
\label{keylemma}
Under hypothesis $\hh_1(\eta)$ and $\hh_2(C_0,L_0,\alpha)$ where $L_0$ is as above, there exists  $C_2>0$ such that for all $k$ and all $t\geq 1$,
$$\left \vert  S_{X_k}(t)-1\right\vert\leq C_2 \mathrm{Vol}(X_k)e^{-\eta_0 t},$$
where $\eta_0=\min(\eta,1/4)$.
\end{lem}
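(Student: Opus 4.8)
The plan is to estimate $S_{X_k}(t)-1$ for $t\geq 1$ by going through the spectral side of the heat trace formula \eqref{heatrace1}. From \eqref{heatrace1} we can write
$$S_{X_k}(t)=\mathrm{Tr}(e^{-t\Delta_k})-\mathrm{Vol}(X_k)\frac{e^{-t/4}}{(4\pi t)^{3/2}}\int_0^\infty \frac{re^{-r^2/4t}}{\sinh(r/2)}dr,$$
and hence
$$S_{X_k}(t)-1=\sum_{j\geq 1} e^{-t\lambda_j(\Delta_k)}-\mathrm{Vol}(X_k)\frac{e^{-t/4}}{(4\pi t)^{3/2}}\int_0^\infty \frac{re^{-r^2/4t}}{\sinh(r/2)}dr.$$
Using the identity recalled in $\S 2$, namely
$$\frac{e^{-t/4}}{\sqrt{4\pi}\,t^{3/2}}\int_0^\infty \frac{re^{-r^2/4t}}{\sinh(r/2)}dr=\int_{-\infty}^{+\infty} x\tanh(\pi x)\,e^{-(x^2+1/4)t}\,dx,$$
the subtracted term is $\mathrm{Vol}(X_k)$ times a fixed function of $t$ which, for $t\geq 1$, is $O(e^{-t/4})$ (the Gaussian weight $e^{-t x^2}$ makes the integral $O(t^{-3/2})$, times the prefactor $e^{-t/4}$). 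So that piece is already bounded by $C\,\mathrm{Vol}(X_k)\,e^{-t/4}\leq C\,\mathrm{Vol}(X_k)\,e^{-\eta_0 t}$.

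The main point is the spectral sum $\sum_{j\geq 1} e^{-t\lambda_j(\Delta_k)}$. By hypothesis $\hh_1(\eta)$ all positive eigenvalues satisfy $\lambda_j\geq \eta$, so for $t\geq 1$ I would split off one unit of exponential decay: $e^{-t\lambda_j}=e^{-(t-1)\lambda_j}e^{-\lambda_j}\leq e^{-(t-1)\eta}e^{-\lambda_j}$, giving
$$\sum_{j\geq 1} e^{-t\lambda_j(\Delta_k)}\leq e^{-(t-1)\eta}\sum_{j\geq 1} e^{-\lambda_j(\Delta_k)}=e^{\eta}e^{-\eta t}\big(\mathrm{Tr}(e^{-\Delta_k})-1\big).$$
Now I need $\mathrm{Tr}(e^{-\Delta_k})=O(\mathrm{Vol}(X_k))$, and this is exactly where Lemma \ref{count1} (equivalently $\hh_2(C_0,L_0,\alpha)$) enters: plugging $t=1$ into the geometric side of \eqref{heatrace1}, the leading term is $\mathrm{Vol}(X_k)$ times a fixed constant, and the geodesic remainder $S_{X_k}(1)$ is controlled by $\sum_{k\geq 1}\sum_{\gamma\in\mathcal{P}_k}\frac{\ell(\gamma)}{2\sinh(k\ell(\gamma)/2)}e^{-(k\ell(\gamma))^2/4}$, which by Abel summation against the counting bound $N_k(T)\leq C_1\mathrm{Vol}(X_k)e^{T}$ is $O(\mathrm{Vol}(X_k))$ (the weight $\frac{\ell}{2\sinh(k\ell/2)}e^{-(k\ell)^2/4}$ decays fast enough in $L=k\ell$ to beat the $e^{L}$ from the counting function). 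Hence $\mathrm{Tr}(e^{-\Delta_k})\leq C\,\mathrm{Vol}(X_k)$, and combining with the estimate on the subtracted integral yields $|S_{X_k}(t)-1|\leq C_2\,\mathrm{Vol}(X_k)\,e^{-\eta_0 t}$ with $\eta_0=\min(\eta,1/4)$.

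The only mildly delicate step is the Abel-summation bound $\mathrm{Tr}(e^{-\Delta_k})=O(\mathrm{Vol}(X_k))$: one must be careful that the constant depends only on $C_0$ and $L_0$ (and the absolute constant $C_1$ from Lemma \ref{count1}), not on $k$ — but since the heat-kernel weight in $L$ is a fixed rapidly decaying function independent of the surface, integrating it against $dN_k(L)\ll \mathrm{Vol}(X_k)e^{L}dL$ gives a uniform constant, so there is no real obstacle. Everything else is elementary manipulation of the trace formula and of the explicit $t\geq 1$ Gaussian integral.
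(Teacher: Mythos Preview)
Your proposal is correct and follows essentially the same route as the paper: rewrite $S_{X_k}(t)-1$ via the spectral side of the heat trace formula, bound the volume term by $\mathrm{Vol}(X_k)e^{-t/4}$, pull out $e^{-(t-1)\eta}$ from the eigenvalue sum using $\hh_1(\eta)$, and then control $\mathrm{Tr}(e^{-\Delta_k})$ at $t=1$ by $\mathrm{Vol}(X_k)$ via Lemma~\ref{count1} and a Stieltjes integration by parts. The paper does exactly this, with the minor cosmetic difference that it bounds the volume integral directly rather than invoking the $x\tanh(\pi x)$ representation.
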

\begin{proof} In this proof we will use Vinogradov's notation $A\ll B$ meaning that $A\leq C B$ where $C>0$ is a {\it universal constant}.
By formula (\ref{heatrace1}) we have
$$\mathrm{Tr}(e^{-t\Delta_{X_k}})-1=\mathrm{Vol(X_k)} \frac{e^{-t/4}}{(4\pi t)^{3/2}}\int_0^\infty \frac{re^{-r^2/4t}}{\sinh(r/2)}dr+S_{X_k}(t)-1,$$
and therefore we get 
$$\vert S_{X_k}(t)-1 \vert \ll \mathrm{Vol}(X_k)e^{-t/4}+\sum_{j=1}^\infty e^{-t\lambda_j(X_k)},$$
for all $t\geq 1$. On the other hand using the uniform spectral gap we have,
$$\sum_{j=1}^\infty e^{-t\lambda_j(X_k)}=\sum_{j=1}^\infty e^{-(t-1)\lambda_j-\lambda_j}\leq e^{-(t-1)\eta}\mathrm{tr}(e^{-\Delta_{X_k}}).$$
Going back to formula (\ref{heatrace1}) with $t=1$, we have also
$$ \mathrm{tr}(e^{-\Delta_{X_k}})\ll \mathrm{Vol}(X_k)+S_{X_k}(1),$$
 and
$$S_{X_k}(1)\ll  \sum_{m\geq 1}\sum_{\gamma \in \mathcal{P}_k} \frac{\ell(\gamma)}{2\sinh(m\ell(\gamma)/2)}e^{-(m\ell(\gamma))^2/4}$$
$$\ll \int_0^\infty \frac{u}{2\sinh(u/2)}e^{-u^2/4}dN_k(u),$$
where we have used the Stieltjes integral notation with the measure $dN_k$ associated to the counting function $N_k$.
We can use Lemma \ref{count1} to bound $N_k(u)$ as
$$N_k(u)\leq C_1 \mathrm{Vol}(X_k)e^{u},$$
and a summation by parts shows that 
$$\int_0^\infty \frac{u}{2\sinh(u/2)}e^{-u^2/4}dN_k(u)=-\int_0^\infty N_k(u) \frac{d}{du}\left \{   \frac{u}{2\sinh(u/2)} e^{-u^2/4}\right\}du $$
$$\ll C_1\mathrm{Vol}(X_k),$$
which ends the proof. 
\end{proof}

\begin{proof}[Proof of Theorem \ref{main2}]
First notice that by formula (\ref{detformula}), we have 
$$\left \vert \frac{\log\det(\Delta_{X_k})}{\mathrm{Vol}(X_k)}-E\right \vert \leq O( \mathrm{Vol}(X_k)^{-1})+ \mathcal{D}^{(1)}_{X_k}+\mathcal{D}^{(2)}_{X_k},$$
where
$$\mathcal{D}^{(1)}_{X_k}= \frac{1}{\mathrm{Vol}(X_k)}\int_1^\infty \frac{\vert S_{X_k}(t)-1\vert}{t}dt,\ \mathcal{D}^{(2)}_{X_k}=\frac{1}{\mathrm{Vol}(X_k)}\int_0^1 \frac{S_{X_k}(t)}{t}dt.$$
Let us fix $\epsilon>0$. We first investigate $\mathcal{D}^{(1)}_{X_k}$. Using $\hh_1(\eta)$ and $\hh_2(C_0,\alpha,L_0)$, we can use Lemma \ref{keylemma} and write
$$\int_1^\infty \frac{\vert S_{X_k}(t)-1\vert}{t}dt\leq \int_1^R\frac{S_{X_k}(t)}{t}dt+\log(R)+C_2 \mathrm{Vol}(X_k)\int_R^\infty\frac{e^{-\eta_0 t}}{t}dt,$$
for any $R>1$.
Fixing $R=R(\epsilon)$ so large that
$$C_2 \int_R^\infty\frac{e^{-\eta_0 t}}{t}dt\leq \epsilon,$$
we have
$$\mathcal{D}^{(1)}_{X_k}\leq \frac{1}{\mathrm{Vol}(X_k)} \int_1^R\frac{S_{X_k}(t)}{t}dt+  \frac{\log(R)}{\mathrm{Vol}(X_k)}+\epsilon.$$
We now pick $L_1>1$ (to be adjusted later on) and write
$$ \int_1^R\frac{S_{X_k}(t)}{t}dt=\int_1^R\frac{S_{X_k}^{L_1,-}(t)}{t}dt +\int_1^R\frac{S_{X_k}^{L_1,+}(t)}{t}dt,$$
where 
$$S_{X_k}^{L_1,-}(t)=\frac{e^{-t/4}}{(4\pi t)^{1/2}}\sum_{m\ell(\gamma)\leq L_1} \frac{\ell(\gamma)}{2\sinh(m\ell(\gamma)/2)}e^{-(m\ell(\gamma))^2/4t},\ $$
$$S_{X_k}^{L_1,+}(t)=\frac{e^{-t/4}}{(4\pi t)^{1/2}}\sum_{m\ell(\gamma)> L_1} \frac{\ell(\gamma)}{2\sinh(m\ell(\gamma)/2)}e^{-(m\ell(\gamma))^2/4t}.$$
Clearly we have
$$\int_1^R\frac{S_{X_k}^{L_1,+}(t)}{t}dt\leq C_3 \sum_{m\ell(\gamma)> L_1} \frac{\ell(\gamma)}{2\sinh(m\ell(\gamma)/2)}e^{-(m\ell(\gamma))^2/4R},$$
for some universal constant $C_3>0$. Using Lemma \ref{count1} and a summation by parts, we deduce that
$$\int_1^R\frac{S_{X_k}^{L_1,+}(t)}{t}dt\leq C_4 \mathrm{Vol(X_k)} \int_{L_1}^\infty \left \vert \frac{d}{du} \left ( \frac{u}{\sinh(u/2)} e^{-u^2/(4R)}\right) \right\vert e^u du.$$
We now take $L_1=L_1(\epsilon)$ so large that 
$$C_4 \int_{L_1}^\infty \left \vert \frac{d}{du} \left ( \frac{u}{\sinh(u/2)} e^{-u^2/(4R)}\right) \right\vert e^u du<\epsilon.$$
We now observe that if $\hh_2(C,L_1,\alpha)$ holds, we have
$$\int_1^R\frac{S_{X_k}^{L,-}(t)}{t}dt=\sum_{m\ell(\gamma)\leq L_1} \frac{\ell(\gamma)}{2\sinh(m\ell(\gamma)/2)}\int_1^R e^{-(m\ell(\gamma))^2/4t} \frac{e^{-t/4}}{\sqrt{4\pi}t^{3/2}}dt$$
$$\leq C_5 L_1 N_k(L_1)\leq C_5CL_1\mathrm{Vol}(X_k)^{\alpha}\leq C_6(L_1,C)\mathrm{Vol}(X_k)^{\alpha},$$
for some possibly large constant $C_6(L_1,C)>0$. In a nutshell, we have obtained, provided that $\hh_2(C,L_1,\alpha)$ is satisfied with $L_1=L_1(\epsilon)$ taken large enough,
$$\limsup_{\mathrm{Vol}(X_k)\rightarrow +\infty} \mathcal{D}^{(1)}_{X_k}\leq 2\epsilon.$$
We now turn our attention to $\mathcal{D}^{(2)}_{X_k}$, and this is where a good control of sums over short geodesics is required. We first use the same idea as above by writing
$$\mathcal{D}^{(2)}_{X_k}=\frac{1}{\mathrm{Vol}(X_k)}\int_0^1 \frac{S_{X_k}(t)}{t}dt= \frac{1}{\mathrm{Vol}(X_k)}\int_0^1\frac{S_{X_k}^{L_2,-}(t)}{t}dt +
\frac{1}{\mathrm{Vol}(X_k)}\int_0^1\frac{S_{X_k}^{L_2,+}(t)}{t}dt.$$
Writing for $t>0$,
$$S_{X_k}^{L_2,+}(t)\leq C_7 t ^{-1/2} \sum_{m\ell(\gamma)> L_2} \frac{\ell(\gamma)}{2\sinh(m\ell(\gamma)/2)}e^{-(m\ell(\gamma))^2/4t},$$
where $C_7>0$ is universal, we have by Fubini
$$\int_0^1\frac{S_{X_k}^{L_2,+}(t)}{t}dt\leq C_7 \sum_{m\ell(\gamma)> L_2} \frac{\ell(\gamma)}{2\sinh(m\ell(\gamma)/2)}G(m\ell(\gamma)),$$
where for $u>0$,
$$G(u)=\int_0^1 t^{-3/2}e^{-u^2/4t}dt.$$
Notice that $u\mapsto G(u)$ is a decreasing function and by a change of variable we have actually for all $u>0$,
$$G(u)=\frac{4}{u}\int_{u/2}^\infty e^{-x^2}dx.$$
We have therefore the bound
$$G(u)=\frac{4}{u}\int_{u/2}^\infty e^{-x^2/2-x^2/2}dx\leq \frac{4}{u}e^{-u^2/8}\int_0^\infty e^{-x^2/2}dx=\frac{2\sqrt{2\pi}}{u}e^{-u^2/8}.$$
As a consequence we get for $L_2>1$
$$\int_0^1\frac{S_{X_k}^{L_2,+}(t)}{t}dt\leq C_8 \sum_{m\ell(\gamma)> L_2} \frac{\ell(\gamma)}{2\sinh(m\ell(\gamma)/2)} e^{-(m\ell(\gamma))^2/8},$$
and by using Lemma \ref{count1} and a summation by parts, we can definitely fix $L_2=L_2(\epsilon)$ large enough so that
$$\int_0^1\frac{S_{X_k}^{L_2,+}(t)}{t}dt\leq C_8 \sum_{m\ell(\gamma)> L_2} \frac{\ell(\gamma)}{2\sinh(m\ell(\gamma)/2)} e^{-(m\ell(\gamma))^2/8}\leq \mathrm{Vol}(X_k)\epsilon. $$

From the above bound on $G(u)$ we also deduce 
$$ \int_0^1\frac{S_{X_k}^{L_2,-}(t)}{t}dt\leq C_9\sum_{m\ell(\gamma)\leq  L_2} \frac{\ell(\gamma)}{\sinh(m\ell(\gamma)/2)}\frac{1}{m\ell(\gamma)}$$
$$\leq C_9'  \sum_{m\ell(\gamma)\leq  L_2} \frac{1}{m^2\ell(\gamma)}.$$
By writing
$$ \sum_{m\ell(\gamma)\leq  L_2} \frac{1}{m^2\ell(\gamma)}=\sum_{m=1}^\infty \frac{1}{m^2} \sum_{\ell(\gamma)\leq L_2/m} \frac{1}{\ell(\gamma)}
\leq \frac{\pi^2}{6} \sum_{\ell(\gamma)\leq L_2} \frac{1}{\ell(\gamma)},$$
we can use estimate (\ref{small_sums}) and $\hh_2(C,L_2(\epsilon),\alpha)$ and we have again as above
$$\int_0^1\frac{S_{X_k}^{L_2,-}(t)}{t}dt\leq C_{10}(L_2,C) \mathrm{Vol}(X_k)^{\alpha},$$
where $C_{10}(L_2,C)>0$ is some (possibly very large) constant depending on $L_2,C$ and $0<\alpha<1$. We have therefore shown, that whenever $(X_k)$ satisfies $\hh_2(C,L(\epsilon),\alpha)$
for some $C>0$ and with $L(\epsilon)=\max\{L_1,L_2 \}$ we have
$$\limsup_{\mathrm{Vol}(X_k)\rightarrow +\infty}  \left \vert \frac{\log\det(\Delta_{X_k})}{\mathrm{Vol}(X_k)}-E\right \vert\leq 3\epsilon.$$
Theorem \ref{main2} is proved. 
\end{proof}

\section{Hypotheses $\hh_1$ and $\hh_2$ hold with high probability}
Theorem \ref{main1} follows immediately from Theorem \ref{main2} if one can establish for the three models of random hyperbolic surfaces considered here that there exists $\eta>0$ and such that $\hh_1(\eta)$ holds a.a.s. and also that there exists $0<\alpha<1$ such that for all $L$ large, one can find $C>0$ such that $\hh_2(C,L,\alpha)$ also holds  a.a.s.

 Indeed we then have for all $\epsilon>0$, and all $\mathrm{Vol}(X)$ large enough
$$\mathbb{P}\left (     \frac{\log\det(\Delta_{X})}{\mathrm{Vol}(X)} \in [E-\epsilon,E+\epsilon] \right)\geq $$
$$\mathbb{P}\left( X\in \hh_1(\eta)\cap \hh_2(C_0,L_0,\alpha)\cap \hh_2(C,L(\epsilon),\alpha) \right),$$
with
$$\lim_{\mathrm{Vol}(X)\rightarrow \infty}\mathbb{P}\left( X\in \hh_1(\eta)\cap \hh_2(C_0,L_0,\alpha)\cap \hh_2(C,L(\epsilon),\alpha)\right)=1.$$

\subsection{Random covers}
First we recall that the cover $X_n\rightarrow X$ may not be connected but we know from \cite{LS1} that
$$\lim_{n\rightarrow +\infty} \mathbb{P}(X_n\ \mathrm{connected})=1.$$
We can therefore either restrict ourselves to connected surfaces $X_n$ or modify the definition of the regularized determinant by setting
$$\zeta_{X_n}(s)=\frac{1}{\Gamma(s)} \int_0^\infty t^{s-1} (\mathrm{Tr}(e^{-t\Delta_X})-d_n)dt,$$
where $d_n$ is the number of connected component of $X_n$ and use the fact that $d_n=1$ with high probability. 

It was recently shown in \cite{MNP} that there exists a uniform spectral gap for random covers $X_n$ a.a.s. as $n\rightarrow +\infty$. More precisely
we have for all $0<\eta<\min\{3/16,\lambda_1(X)\}$, 
$$\lim_{n\rightarrow+\infty} \mathbb{P}(\lambda_1(X_n)\geq \eta)=1. $$
Therefore  $\hh_1(\eta)$ holds a.a.s. provided $\eta$ is taken small enough. On the other hand property $\hh_2$ is less obvious from the existing litterature 
and will require some explanations. We recall that given a random homomorphism $\phi_n:\Gamma\rightarrow \mathcal{S}_n$, one can define
a unitary representation $\rho_n$ of $\Gamma$ by setting
$$\rho_n(\gamma)(f):=f\circ\phi_n(\gamma)^{-1},$$
where $f\in L^2([n])$, and the representation space is $L^2([n])\simeq \C^n$. The main interest of this representation is the following fact, often called ``Venkov--Zograf induction formula". For all $\Re(s)>1$, one can define
the Selberg zeta function of $X_n$ by
$$Z_{X_n}(s):=\prod_{m\geq 0} \prod_{\gamma \in \mathcal{P}_{X_n}}\left ( 1-e^{-(s+m)\ell(\gamma)}\right).$$
One can also look at the twisted Selberg zeta function of the base $X=\Gamma \backslash \H$ defined for $\Re(s)>1$ by
$$Z_{X,\rho_n}(s):=\prod_{m\geq 0} \prod_{\gamma \in \mathcal{P}_{X}}\det\left ( I-\rho_n(\gamma)e^{-(s+m)\ell(\gamma)}\right).$$
It turns out that we have for all $s$, $Z_{X_n}(s)=Z_{X,\rho_n}(s)$, see \cite[Page 51]{Venkov}. By computing logarithmic derivatives we have for all $\Re(s)>1$,
$$\frac{Z'_{X_n}(s)}{Z_{X_n}(s)}=\sum_{\gamma \in \mathcal{P}_{X_n}}\sum_{q\geq 1} \frac{\ell(\gamma)e^{-sq\ell(\gamma)}}{1-e^{-q\ell(\gamma)}}=\sum_{\gamma \in \mathcal{P}_{X}}\sum_{q\geq 1} \frac{\ell(\gamma)\mathrm{tr}(\rho_n(\gamma^q))e^{-sq\ell(\gamma)}}{1-e^{-q\ell(\gamma)}}.$$
Let $\phi \in C_0^\infty(\R^+)$ be a compactly supported smooth test function, and set 
$$\psi(s):=\int_0^\infty e^{sx}\phi(x)dx.$$
One can check that $\psi(s)$ is actually analytic on $\C$ and by Fourier inversion formula we have for all $A>1$,
$$\frac{1}{2i\pi}\int_{A-i\infty}^{A+i\infty} \frac{Z'_{X_n}(s)}{Z_{X_n}(s)}\psi(s)ds=
\sum_{\gamma \in \mathcal{P}_{X_n}}\sum_{q\geq 1} \frac{\ell(\gamma)}{1-e^{-q\ell(\gamma)}}\phi(q\ell(\gamma)) $$
$$=\sum_{\gamma \in \mathcal{P}_{X}}\sum_{q\geq 1} \frac{\ell(\gamma)\mathrm{tr}(\rho_n(\gamma^q))}{1-e^{-q\ell(\gamma)}}\phi(q\ell(\gamma)).$$
See for example \cite{JN} $\S 3$ for more details on the derivation of this formula.
By carefully choosing the test function $\phi$ we deduce that for all $\mathcal{L}\in \R^+$, we recover the identity
$$\sum_{\gamma \in \mathcal{P}_{X_n}}\sum_{q\geq 1 \atop q\ell(\gamma)=\mathcal{L}} \ell(\gamma)=\sum_{\gamma \in \mathcal{P}_{X}}\sum_{q\geq 1\atop q\ell(\gamma)=\mathcal{L}} \ell(\gamma)\mathrm{tr}(\rho_n(\gamma^q)).$$
Notice that this formula can be proved directly by group theoretic arguments, see for example in \cite{PF1}, in the proof of theorem 7.1.
From this identity we deduce that for all $L>0$, we have
$$\sum_{\gamma \in \mathcal{P}_{X_n}}\sum_{q\geq 1 \atop q\ell(\gamma)\leq L} \ell(\gamma)=\sum_{\gamma \in \mathcal{P}_{X}}\sum_{q\geq 1\atop q\ell(\gamma)\leq L} \ell(\gamma)\mathrm{tr}(\rho_n(\gamma^q)).$$
In particular we have
$$\ell_0(X_n)N_{X_n}(L)\leq  L \sum_{\gamma \in \mathcal{P}_{X}}\sum_{q\geq 1\atop q\ell(\gamma)\leq L}\mathrm{tr}(\rho_n(\gamma^q)),$$
where $\ell_0(X_n)$ denotes the shortest closed geodesic length on $X_n$.
We point out that we have actually $\mathrm{tr}(\rho_n(\gamma^q))=\mathrm{Fix}(\phi_n(\gamma^q))$, where $\mathrm{Fix}(\sigma)$ denotes the number of fixed points of the
permutation $\sigma$ acting on $[n]$. From the combinatorial analysis of Magee-Puder \cite{MP1,MNP}, we know that for all primitive $\gamma \in \Gamma$
and $q\geq 1$, we have
$$\lim_{n\rightarrow \infty}  \mathbb{E}(\mathrm{Fix}(\phi_n(\gamma^q)))=d(q),$$
where $d(q)$ stands for the number of divisors of $q$. Noticing that in the random cover model, 
we have always $\ell_0(X_n)\geq \ell_0(X)$, this his is enough to conclude that for all $L$, we have
$$ \lim_{n\rightarrow \infty}  \mathbb{E}(N_{X_n}(L))\leq C(\Gamma,L),$$
where $C(\Gamma,L)>0$ is some (possibly large) constant. Applying Markov's inequality, we get that for all $\varepsilon>0$ and $L$ fixed, 
$$\lim_{n\rightarrow \infty} \mathbb{P}\left(N_{X_n}(L)\leq \mathrm{Vol}(X_n)^\varepsilon\right)=1.$$
As a conclusion, in the random cover model, $\hh_2(C,L,\alpha)$ is satisfied a.a.s. for all $L$ large and all $\alpha>0$.

\subsection{Brooks--Makover model} In the paper \cite[Theorem 2.2]{BM}, they show that there exists a constant $C_1>0$ such that 
as $n\rightarrow \infty$,
$$\mathbb{P}(\lambda_1(S_n^C)\geq C_1)\rightarrow 1,$$
In other words, property $\hh_1(\eta)$ is satisfied a.a.s. for some $\eta>0$.

We point out that contrary to the model of random covers, the systole of $S_n^C$ can be arbitrarily small, but we actually know that there exists $C_2>0$ such that as $n\rightarrow +\infty$, $$\mathbb{P}(\ell_0(S_n^C)\geq C_2)\rightarrow 1.$$
Counting results for closed geodesics
follow from the later work of Petri \cite{Petri1}.
More precisely, one can derive from \cite{Petri1} the following fact.
\begin{propo}
\label{bm}
For all $L>0$ fixed, we can find an integer $N_L$ and a finite set of words
$$\mathcal{W}_L\subset \{l,r\}^{N_L},$$
such that with high probability as $n\rightarrow \infty$,
$$N_{n}(L):=\#\{(\gamma,m) \in \mathcal{P}_{S_n^C}\times \N^*\ :\ m\ell(\gamma)\leq L \} \leq \sum_{w \in \mathcal{W}_L} Z_{n,w},$$
where $Z_{n,w}$ are integer-valued random variables.
In addition each $Z_{n,w}$ converges in the sense of moments (and hence in distribution) as $n\rightarrow \infty$ to a Poisson variable with expectation $\lambda_w>0$.
\end{propo}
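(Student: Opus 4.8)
The plan is to push everything back to the cusped surface $S_n^O$, where closed geodesics carry a combinatorial coding coming from the fact that $S_n^O$ is an (unramified) cover of the modular orbifold, and then to import the length comparison between $S_n^O$ and its conformal compactification $S_n^C$ from \cite{BM}. First recall that a.a.s. $\ell_0(S_n^C)\geq C_2>0$, so on that event a pair $(\gamma,m)$ with $m\ell(\gamma)\leq L$ has $m\leq L/C_2$; hence it suffices to bound the number of \emph{primitive} closed geodesics of length $\leq L$ on $S_n^C$, the resulting bounded multiplicity factor being absorbed at the end by listing iterated types in $\mathcal W_L$. Next, the construction of $S_n^C$ in \cite[\S3]{BM} provides, a.a.s., a comparison of the two hyperbolic metrics: one controls the number and the conformal size of the cusps of $S_n^O$, so that the uniformising homeomorphism $u\colon S_n^O\to S_n^C$ is bi-Lipschitz with a uniform constant $K\geq 1$ away from a fixed-size neighbourhood of the cusps. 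Since a closed geodesic of length $\leq L$ on $S_n^C$ stays in the thick part and is non-peripheral, it corresponds through $u$ to a non-peripheral free homotopy class on $S_n^O$ whose geodesic representative has length $\leq KL=:L'$; distinct classes on $S_n^C$ give distinct classes on $S_n^O$, so a.a.s.
$$\#\{\gamma\in\mathcal P_{S_n^C}:\ \ell(\gamma)\leq L\}\ \leq\ \#\{\gamma\in\mathcal P_{S_n^O}:\ \ell(\gamma)\leq L'\}.$$

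Next I would code closed geodesics on $S_n^O$. As $S_n^O$ is glued from $2n$ ideal triangles dual to the trivalent graph $\mathcal G_n$, a closed geodesic is described by a cyclic word $w\in\{l,r\}^{N}$ recording the left/right turns along the corresponding cycle (the words $l^N$ and $r^N$ describe cusps, not geodesics); equivalently $w$ is read off from a product of the two standard parabolic generators, and its hyperbolic length satisfies $\ell(w)\geq c\log N-c'$ for universal constants $c>0$, $c'$. Consequently every closed geodesic of length $\leq L'$ has combinatorial length $N\leq N_{L'}$ for some integer $N_{L'}$, and there are only finitely many admissible words of that length. Let $\mathcal W_L$ be the corresponding finite set of words, of length at most $N_L:=\lceil L/C_2\rceil N_{L'}$, enlarged to also contain, for each relevant primitive type $w_0$, its iterates $w_0^j$ with $1\leq j\leq\lceil L/C_2\rceil$. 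For $w\in\mathcal W_L$, let $Z_{n,w}$ be the number of closed geodesics on $S_n^O$ of cyclic type $w$; writing $w=w_0^j$ with $w_0$ primitive one has $Z_{n,w}=Z^{\mathrm{prim}}_{n,w_0}$, and Petri's analysis in \cite{Petri1} — which identifies the limit of the short-cycle statistics of the Bollob\'as model with a Poisson point process — yields that $Z^{\mathrm{prim}}_{n,w_0}$ converges in the sense of moments to a Poisson variable of parameter $\lambda_{w_0}>0$. Hence each $Z_{n,w}$ converges in moments (so also in distribution) to $\mathrm{Poisson}(\lambda_w)$ with $\lambda_w:=\lambda_{w_0}>0$.

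Putting the two steps together, a.a.s.
$$N_n(L)\ \leq\ \Big\lceil\tfrac{L}{C_2}\Big\rceil\,\#\{\gamma\in\mathcal P_{S_n^O}:\ \ell(\gamma)\leq L'\}\ \leq\ \sum_{w\in\mathcal W_L}Z_{n,w},$$
which is exactly the assertion: the middle quantity equals $\lceil L/C_2\rceil\sum_{w_0}Z^{\mathrm{prim}}_{n,w_0}$, and the $\lceil L/C_2\rceil$ iterates built into $\mathcal W_L$ supply this factor. The main obstacle is the first step: one must make the passage from the cusped surface $S_n^O$ to its conformal compactification $S_n^C$ quantitative and valid a.a.s. — controlling the number and the conformal size of the cusps of $S_n^O$, hence the distortion of the uniformising map — so that a bounded-length geodesic on $S_n^C$ genuinely corresponds to a bounded-length, and therefore bounded-combinatorial-type, geodesic on $S_n^O$. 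Once that comparison is in hand, the remainder is bookkeeping on top of Petri's Poisson limit theorem.
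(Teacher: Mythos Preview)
Your argument is correct and matches the paper's proof: reduce from $S_n^C$ to $S_n^O$ via the large-cusps metric comparison of \cite[Theorem~3.2]{BM} and \cite[Lemma~2.5]{Petri1} (the paper obtains $N_n(L)\le N_n^O(2L)$ in one stroke rather than first bounding $m$ via the systole), code geodesics on $S_n^O$ by $\{l,r\}$-words with word length controlled by \cite[Lemma~3.1]{PW}, and invoke \cite[Theorem~B]{Petri1} for the Poisson limit. One phrasing issue: there is no homeomorphism $u\colon S_n^O\to S_n^C$ since $S_n^O$ is noncompact; the correct comparison is the quasi-isometry on compact cores guaranteed by the large-cusps condition, which is exactly the input you flag as the main obstacle and which \cite{BM} supplies.
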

By applying Markov's inequality, one deduces readily that for all $\varepsilon>0$ we have a.a.s. 
$$\sum_{w \in \mathcal{W}_L} Z_{n,w}\leq n^\epsilon\leq C_L\mathrm{Vol}(S_n^C)^\varepsilon.$$
This is enough to conclude that for all $\varepsilon>0$, with high probability  as $n\rightarrow \infty$, we have 
$$N_n(L)\leq C\mathrm{Vol}(S_n^C)^\varepsilon,$$
and therefore property $\hh_2(C,\alpha,L)$ holds for any choice of $\alpha>0$, just like in the previous model.

Let us now give some details on the proof of Proposition \ref{bm}. The first step is to reduce the problem to a counting bound for $S_n^O$. In \cite[Section 3]{BM}, they introduce the notion of ``large cusps" condition for $S_n^O$. This condition is satisfied a.a.s for $S_n^O$ as $n\rightarrow \infty$ see \cite[Theorem 2.1]{BM}. The main interest of this condition is  \cite[Theorem 3.2]{BM}, see also \cite[Lemma 2.5]{Petri1}, which allows to show that provided this ``large cusps" condition is satisfied, one can bound
$$N_n(L)\leq \#\{(\gamma,m) \in \mathcal{P}_{S_n^0}\times \N^*\ :\ m\ell(\gamma)\leq 2L \}=:N_n^O(2L).$$
As explained in \cite{BM}, $\S 4$, closed geodesics and their length in $S_n^O$ can be described via the combinatorial data of $(\mathcal{G}_n,\mathcal{O})$: any closed geodesic in 
$S_n^O$ corresponds to a word $w\in \{l,r\}^N$, for some $N>0$. To this word one can associate a matrix $M_w$ in $SL_2(\N)$ via the rule
$$M_w=W_1\ldots W_N,$$
where $W_j=\mathcal{L}$ if $w_j=l$ and $W_j=\mathcal{R}$ if $w_j=r$, where
$$\mathcal{L}=\left ( \begin{array}{cc} 1&1\\0&1 \end{array}\right),\  \mathcal{R}=\left ( \begin{array}{cc} 1&0\\1&1 \end{array}\right).$$
The length $\ell_w$ of the corresponding geodesic on $S_n^O$ is then given by 
$$\mathrm{tr}(M_w)=2\cosh(\ell_w/2).$$
All we need to check is that fixing $L$ implies finiteness of the corresponding set of words (by bounding their word length).
This is done in \cite[Lemma 3.1]{PW}. One obtains therefore that $L$ being fixed, there exists a finite subset 
$\mathcal{W}_L\subset \{l,r\}^{N_L}$ for some large $N_L>0$, such that for all $n$, all closed geodesics with length $\leq 2L$ on $S_n^O$ are given by words $w\in W_L$. Proposition
\ref{bm} now follows directly from \cite[Theorem B]{Petri1}.
\subsection{Weil--Petersson model} 
If $f$ is a measurable non-negative function on moduli space $\mathscr{M}_g$, we will denote by
$\int_{\mathscr{M}_g} f(X)dX$ the corresponding integral with respect to Weil--Petersson volume. In the latter $V_g$ will denote the Weil--Petersson volume of 
$\mathscr{M}_g$ so that the expectation of $f$ is given by
$$\mathbb{E}_g(f):= \frac{1}{V_g}\int_{\mathscr{M}_g} f(X)dX.$$ 
In \cite{Mir1}, the following fact was proved. There exists $\eta>0$ such that as $g\rightarrow +\infty$, we have
$$\mathbb{P}( \lambda_1(X)\geq \eta)\rightarrow 1.$$
The constant $\eta$ given by Mirzakhani follows from Cheeger's inequality and an estimate a.a.s. of Cheeger's isoperimetric constant. It was shown independently
in \cite{WX,WL} that one can actually take $\eta=3/16-\epsilon$, and more recently $\eta=2/9-\epsilon$ by Anantharaman and Monk \cite{AM}. This shows that $\hh_1(\eta)$ holds with high probability as $g\rightarrow +\infty$ 
for some universal $\eta>0$. In \cite[Theorem 4.2]{Mir1} and the remark after, Mirzakhani proved that there exists a universal $\epsilon_0>0$ such that for all $0<\epsilon\leq \epsilon_0$, one has for all $g$ large $$ \mathbb{P}( \ell_0(X)\leq \epsilon)\leq C \epsilon^2,$$
where $C$ is uniform in $g$. In particular for all $\varepsilon>0$, we get for all $g$ large
$$\mathbb{P}\left( \ell_0(X)\geq \mathrm{Vol}(X)^{-\varepsilon}\right)\geq 1-O(\mathrm{Vol}(X)^{-2\varepsilon}).$$
On the other hand, in the paper \cite{MP}, Mirzakhani and Petri showed that for all $L>0$ fixed, the random variable
$$N_g^0(L):=\#\{\gamma \in \mathcal{P}_X\ :\ \ell(\gamma)\leq L \}$$
converges in distribution as $g\rightarrow +\infty$ to a Poisson variable $Z_{\lambda_L}$ with parameter
$$\lambda_L:=\int_0^L\frac{e^t+e^{-t}-2}{2t}dt.$$
Moreover, we have also convergence of all moments with $p\in \N$
$$\lim_{g\rightarrow \infty} \mathbb{E}(\left (N_g^0(L)\right)^p)=\mathbb{E}(Z_{\lambda_L}^p).$$
An application of Markov's inequality then shows that for all $\varepsilon>0$, for all $g$ large enough
$$\mathbb{P}(N_g^0(L)\leq \mathrm{Vol}(X)^\varepsilon)\geq 1-C_L\mathrm{Vol}(X)^{-\varepsilon}.$$
To control the counting function
$$N_g(L):=\#\{(\gamma,m) \in \mathcal{P}_X\times \N^*\ :\ m\ell(\gamma)\leq L \},$$
we write
$$N_g(L)\leq \sum_{m=1}^{[L/\ell_0(X)]+1}N_g^0(L/m)\leq N_g^0(L)\left(1+\frac{L}{\ell_0(X)}\right).$$
For all $\varepsilon>0$, we have with high probability as $g\rightarrow +\infty$
$$N_g(L)\leq \mathrm{Vol}(X)^\varepsilon+L\mathrm{Vol}(X)^{2\varepsilon}=O_L( \mathrm{Vol}(X)^{2\varepsilon}),$$
and thus for all $L$ large, there exists $C_L>0$ such that
$\hh_2(C_L,L,\alpha)$ holds with high probability for all $\alpha>0$ on $\mathscr{M}_g$ as $g\rightarrow +\infty$. 
\section{Convergence results for moments of $\log\det(\Delta_X)$.}
In this last section, we give the proof of the following fact.
\begin{thm}
\label{limit1}
 In the Weil--Petersson model, for all $0<\beta<1$ we have
$$\lim_{g\rightarrow \infty} \frac{1}{V_g (4\pi(g-1))^\beta}\int_{\mathscr{M}_g} \left \vert \log \det(\Delta_X)\right \vert^\beta dX =E^\beta.$$
In the random cover model, Let $\chi_0=\mathbf{1}_{\{X_n\ \mathrm{connected}\}}$. Then as $n\rightarrow +\infty$, for all $\beta>0$, we have 
$$\lim_{n\rightarrow \infty} \mathbb{E}\left(\frac{\vert \log \det(\Delta_{X_n})\vert^\beta}{\mathrm{Vol}(X_n)^\beta}\chi_0 \right)=E^\beta.$$
\end{thm}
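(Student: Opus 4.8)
The plan is to prove Theorem \ref{limit1} by turning the convergence in probability provided by Theorem \ref{main1} into convergence of $\beta$-moments. The elementary fact I would invoke is that if a sequence $Y_j$ of nonnegative random variables converges in probability to a constant $c>0$ and the family $(Y_j^\beta)$ is uniformly integrable, then $\mathbb{E}(Y_j^\beta)\to c^\beta$. Here $Y_j = |\log\det(\Delta_{X_j})|/\mathrm{Vol}(X_j)$ (multiplied by $\chi_0$ in the random cover case). Convergence in probability to $E$ is exactly Theorem \ref{main1}, so the whole content is establishing uniform integrability, and for $0<\beta<1$ it actually suffices to bound the first moment $\mathbb{E}(Y_j)$, since then $\mathbb{E}(Y_j^\beta)\leq 1+\mathbb{E}(Y_j)$ uniformly, and more carefully one controls the tail $\mathbb{E}(Y_j^\beta \mathbf{1}_{Y_j>M})\leq M^{\beta-1}\mathbb{E}(Y_j)$ which is small uniformly in $j$ for $M$ large.

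So the core estimate I would prove is a uniform bound of the form $\mathbb{E}(|\log\det(\Delta_{X_j})|) = O(\mathrm{Vol}(X_j))$, or at worst $O(\mathrm{Vol}(X_j)^{1+\delta})$ with $\delta$ small enough relative to $\beta$. Starting from the identity (\ref{detformula}), the deterministic terms $\mathrm{Vol}(X)E + \gamma_0$ are harmless, so I must bound the expectation of $\int_0^1 S_X(t)/t\,dt$ and of $\int_1^\infty (S_X(t)-1)/t\,dt$. For the large-$t$ integral, I would use the spectral side: $|S_X(t)-1|$ is controlled by $\sum_{j\geq1} e^{-t\lambda_j} \leq e^{-(t-1)\lambda_1(X)}\mathrm{Tr}(e^{-\Delta_X})$, and $\mathrm{Tr}(e^{-\Delta_X}) \ll \mathrm{Vol}(X) + S_X(1)$; one needs the expectation of $1/\lambda_1(X)$ and of $S_X(1)$ to be well-controlled. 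Since $\lambda_1$ could a priori be tiny, I would use the moment bounds on $\lambda_1^{-1}$ (for Weil--Petersson, small eigenvalues force short geodesics, and Mirzakhani's $\mathbb{P}(\ell_0(X)\leq\epsilon)\leq C\epsilon^2$ together with a Cheeger-type lower bound $\lambda_1\gtrsim \ell_0$ on the ``thin'' event, or the eigenvalue tail estimates, gives $\mathbb{E}(\lambda_1^{-p})<\infty$ for suitable $p$; in the random cover model $\lambda_1$ is bounded below deterministically on the connected event by the base surface, so this is immediate). For the small-$t$ integral $\int_0^1 S_X(t)/t\,dt$, the bound from the proof of Theorem \ref{main2} shows it is $\ll \sum_{m\ell(\gamma)\leq 1}\frac{1}{m^2\ell(\gamma)} + \mathrm{Vol}(X) \ll \frac{1}{\ell_0(X)}N_X(1) + \mathrm{Vol}(X)$, using (\ref{small_sums}); one then takes expectations using the moment control of $N_X(1)$ (from \cite{MP} Poisson convergence of all moments in the Weil--Petersson case, from \cite{MP1,MNP} in the cover case) and of $\ell_0(X)^{-1}$.

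The main obstacle, and where the case distinction in the statement (all $\beta>0$ for covers, only $0<\beta<1$ for Weil--Petersson) comes from, is the interaction between short geodesics and small eigenvalues in the Weil--Petersson model: the factor $1/\ell_0(X)$ in the small-$t$ bound and the factor $1/\lambda_1(X)$ in the large-$t$ bound are both unbounded random variables, and one only has polynomial moment control on them (roughly $\mathbb{E}(\ell_0^{-p})<\infty$ for $p<2$). This limits the integrability of $Y_j$ to something like $L^{p}$ for $p<2$, hence moments $\mathbb{E}(Y_j^\beta)$ are only guaranteed to converge for $\beta<1$ after the uniform-integrability argument (one needs $Y_j^\beta$ uniformly integrable, which follows from an $L^{1+\delta}$ bound on $Y_j^\beta$, i.e. $\beta(1+\delta)<$ the available exponent). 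For random covers, $\ell_0$ and $\lambda_1$ are bounded on the connected event, so $Y_n$ is actually bounded by $O(\mathrm{Vol}(X_n))$ deterministically up to the rare non-connected event, giving uniform boundedness of $Y_n\chi_0$ and hence convergence of all moments. I would carry out the two models separately: first the cover case (clean, deterministic bounds plus Theorem \ref{main1}), then the Weil--Petersson case (assemble the moment estimates on $\ell_0^{-1}$, $\lambda_1^{-1}$, $N_g(1)$, plug into (\ref{detformula}), get an $L^p$ bound for some $p>1$, and conclude via uniform integrability for $\beta<1$). Finally I would note, per the acknowledgement, that the Weil--Petersson range can be pushed further using sharper short-geodesics/small-eigenvalue moment bounds, but the argument above suffices for $0<\beta<1$.
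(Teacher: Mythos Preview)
Your overall strategy---convergence in probability from Theorem \ref{main1} plus uniform integrability via an a priori pointwise bound on $|\log\det(\Delta_X)|/\mathrm{Vol}(X)$---is exactly the paper's, and your handling of the Weil--Petersson case is close to what the paper does (the paper uses a slightly sharper decomposition \`a la Wolpert, subtracting off the contribution of \emph{all} eigenvalues below $1/4$ so that the remaining large-$t$ integral decays like $e^{-t/4}$ rather than $e^{-t\lambda_1}$; this replaces your product $(1+1/\ell_0)|\log\lambda_1|$ by a sum $|\log\lambda_*|+1/\ell_0$, which is easier to integrate). Your reference to ``a Cheeger-type lower bound $\lambda_1\gtrsim\ell_0$'' is not quite right; the paper instead uses the genuine Cheeger inequality $\lambda_1\geq h_X^2/4$ together with Mirzakhani's uniform bound on $\int_{\mathscr{M}_g} h_X^{-\alpha}\,dX$ for $\alpha<2$.

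There is, however, a real gap in your treatment of the random cover model. You assert that ``$\lambda_1$ is bounded below deterministically on the connected event by the base surface,'' and later that $Y_n\chi_0$ is uniformly bounded. This is false: connected $n$-sheeted covers of $X$ can have $\lambda_1(X_n)$ arbitrarily small (think of long cyclic covers), and nothing about the base surface alone prevents this. The paper addresses this by proving a nontrivial \emph{polynomial} lower bound $\lambda_1(X_n)\geq C_\Gamma n^{-3/2}$ on the connected event, via Sunada's inequality relating $\lambda_0(\Delta_{\rho_n^0})$ to $\max_{\gamma\in S}\|\rho_n^0(\gamma)U-U\|$ and a diameter argument on the Schreier graph. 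This only gives $|\log\lambda_*(X_n)|=O(\log n)$, so $Y_n\chi_0$ is \emph{not} uniformly bounded; to conclude for all $\beta>0$ one must further use the quantitative tail estimate $\mathbb{P}(\lambda_1(X_n)\leq r)\leq C_\epsilon n^{-(4\sqrt{1/4-r}-1-\epsilon)}$ from \cite{MNP}, so that the contribution of the bad event is $O(n^{-\alpha}(\log n)^\beta)=o(1)$. Without these two ingredients your argument does not close in the cover case.
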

\begin{proof}[Proof for the Weil--Petterson model]
The proof for the Weil--Petersson model is a rather direct consequence of Theorem \ref{main1} and some estimates of Mirzakhani \cite{Mirza}. We first need an a priori estimate for
$\vert \log \det(\Delta_X)\vert$ which follows from similar ideas as in Theorem \ref{main2}, without the probabilistic input. We use Vinogradov's notation $\ll$ where the implied constant
is universal.

By using Buser's counting bound \cite{Buser}, as in the proof of Lemma \ref{count1}, we have the following universal bound for the number of closed geodesics of a surface with genus $g$:
$$N_X(L)\leq (g-1)e^{L+6}+\frac{L}{\ell_0(X)}(3g-3)$$
\begin{equation}
\label{univ1}
\ll \mathrm{Vol}(X)e^L\left ( 1+\frac{1}{\ell_0(X)}\right),
\end{equation}
for some universal constant $A_1>0$. We now prove an a priori upper bound for $\vert\log \det(\Delta_X)\vert$. Following ideas of Wolpert \cite{Wolpert}, it is convenient to write for all $\Re(s)$ large,
$$\zeta_X(s)=\sum_{0<\lambda_j < 1/4} \lambda_j^{-s}+\frac{1}{\Gamma(s)}\int_{0}^\infty t^{s-1}\left ( \mathrm{tr}(e^{-t\Delta_X})-\sum_{0\leq \lambda_j<1/4}e^{-t\lambda_j}  \right)dt,$$
which can be rewritten as
$$\zeta_X(s)=\sum_{0<\lambda_j < 1/4} \lambda_j^{-s}-\sum_{0\leq \lambda_j<1/4}H(s,\lambda_j)+ \zeta^{(1)}_X(s)+$$
$$\frac{1}{\Gamma(s)}\int_{1}^\infty t^{s-1}\left ( S_X(t)-\sum_{0\leq \lambda_j<1/4}e^{-t\lambda_j}  \right)dt+\frac{1}{\Gamma(s)}\int_{0}^1 t^{s-1}S_X(t)dt,$$
where we have set for $\Re(s)$ large,
$$H(s,\lambda):= \frac{1}{\Gamma(s)}\int_{0}^1 t^{s-1}e^{-\lambda t}dt.$$
By integration by parts and elementary properties of the Euler gamma function, we then observe that for all $\lambda\in [0,1/4]$, $s\mapsto H(s,\lambda)$ has an analytic extension to $s=0$.
Morevover, if we set 
$$C(\lambda):=\left. \frac{d}{ds}\right \vert_{s=0}H(\lambda,s),$$
then there exists a universal constant $A_1>0$ such that for all $\lambda \in [0,1/4]$, $$\vert C(\lambda)\vert\leq A_1.$$
This formula leads to the identity
$$\log\det(\Delta_X)= \sum_{0<\lambda_j < 1/4} \log(\lambda_j)+\sum_{0<\lambda_j < 1/4} C(\lambda_j)- \left. \frac{d}{ds}\right \vert_{s=0}\zeta^{(1)}_X(s)$$
$$-\int_0^1 \frac{S_X(t)}{t}dt-\int_1^\infty \frac{(S_X(t)-\sum_{0\leq \lambda_j<1/4} e^{-\lambda_jt })}{t}dt. $$
By mimicking the proof of Lemma \ref{keylemma} and using the above counting bound, we deduce that for all $t\geq 1$,
$$\left \vert S_X(t)-\sum_{0\leq \lambda_j<1/4}e^{-t\lambda_j}  \right\vert\ll \mathrm{Vol}(X)\left ( 1+\frac{1}{\ell_0(X)}\right)e^{-t/4}.$$
By Fubini and the estimate on $u\mapsto G(u)$ we have also
$$\int_0^1 \frac{S_X(t)}{t}dt\ll \sum_{m,\gamma} \frac{e^{-(m\ell(\gamma))^2/8}}{m\ell(\gamma)} 
\ll \sum_{m\ell(\gamma)\leq 1} \frac{1}{m\ell(\gamma)}+\sum_{m\ell(\gamma)>1} e^{-(m\ell(\gamma))^2/8}$$
$$\ll  \sum_{m\ell(\gamma)\leq 1} \frac{1}{m\ell(\gamma)}+ \mathrm{Vol}(X)\left(1+\frac{1}{\ell_0(X)}\right).$$
By noticing that we have
$$ \sum_{m\ell(\gamma)\leq 1} \frac{1}{m\ell(\gamma)}\ll \frac{\log^+ \ell_0^{-1}(X)}{\ell_0(X)}N_X^0(1),$$
where $\log^+(x)=\max\{0,\log(x)\}$ and $N_X^0(L)$ is the counting function for primitive closed geodesics, we have obtained the estimate:
$$\vert \log \det(\Delta_X)\vert \ll\mathrm{Vol}(X)\left ( 1+\vert \log(\lambda_*(X))\vert  +\frac{1}{\ell_0(X)}+
\right)+\frac{\log^+ \ell_0^{-1}(X)}{\ell_0(X)}N_X^0(1) ,$$
where we have set $\lambda_*(X)=\min \{\lambda_1(X),1/4 \}$, and we have used the rough bound of Buser \cite{Buser}:
$$\#\{ \lambda_j < 1/4\}\leq 4g-3=1+\frac{\mathrm{Vol}(X)}{\pi}.$$
Notice that the optimal bound of Otal--Rosas \cite{OR}
$$\#\{ \lambda_j < 1/4\}\leq 2g-2,$$
won't make any difference here.

This estimate of $\vert \log \det(\Delta_X)\vert$ is consistent with the fact that $\det(\Delta_X)$ has exponential growth when $X$ approaches certain boundary points of the (compactified) moduli space, a fact that was rigourously established by Wolpert in \cite{Wolpert}. In particular the so-called ``bosonic Polyakov integral" involving the determinant over the moduli space is indeed infinite, see \cite{Wolpert}.

Using the inequality for all $\beta>0$ and all $a_j\geq 0$,
$$\left ( \sum_{j=1}^4 a_j \right)^\beta\leq 4^\beta (\max_j a_j)^\beta \leq 4^\beta \left ( \sum_{j=1}^4 a_j^\beta\right),$$
we end up with the estimate 
$$\frac{\vert \log \det(\Delta_X)\vert^\beta}{\mathrm{Vol}(X)^\beta} \ll \left (1+ \vert \log(\lambda_*(X))\vert^\beta+\frac{1}{\ell_0(X)^\beta} +
\mathrm{Vol}(X)^{-\beta} \left (\frac{\log^+ \ell_0^{-1}(X)}{\ell_0(X)}N_X^0(1) \right)^\beta\right).$$
From Mirzakhani \cite[Corollary 4.3]{Mirza}, we know that 
$$\int_{\mathscr{M}_g} \frac{1}{\ell_0(X)}dX\leq C V_g, $$
where $C>0$ is independent of $g$, from which we can deduce easily that for all $0<\alpha<1$, we have
$$ \int_{\mathscr{M}_g} \left (\frac{\log^+ \ell_0^{-1}(X)}{\ell_0(X)}\right)^\alpha dX\leq C V_g,$$
for some $C>0$ {\it uniform in $g$}. Indeed, for all $0<\alpha<1$, there exists a universal consant $r_0>0$ such that
$0<x\leq r_0$ implies
$$\left (\frac{\vert \log x\vert}{x}\right)^\alpha\leq \frac{1}{x}.$$
By integrating over $\{X \in \mathscr{M}_g\ :\ \ell_0(X)\leq r_0\}$ and using Mirzakhani's bound we get the desired bound while on the complementary set
$$X\mapsto \left (\frac{\log^+ \ell_0^{-1}(X)}{\ell_0(X)}\right)^\alpha $$
is uniformly bounded.

On the other hand, Cheeger's inequality says that
$$\lambda_1(X)\geq \frac{h_X^2}{4},$$
where $h_X$ is the so-called Cheeger constant of $X$, which is defined by an isoperimetric quantity, see for example in \cite{Buser}, chapter 8. 
Again By Mirzakhani \cite[Theorem 4.8]{Mirza}, we know that for all $0\leq \alpha<2$, we have
$$\int_{\mathscr{M}_g} \frac{1}{(h_X)^\alpha}dX\leq C V_g ,$$
where $C>0$ is again uniform with respect to $g$. In particular we deduce that for all $\beta>0$,
$$\int_{\mathscr{M}_g} \vert \log(\lambda_*(X))\vert^\beta dX\leq C V_g.$$
Assuming that $\beta<1$, we choose $p>1$ such that $\beta<p\beta<1$ and let $q$ be such that $1/p+1/q=1$. By H\"older's inequality we get
$$\mathbb{E}_g \left(\left (\frac{\log^+ \ell_0^{-1}(X)}{\ell_0(X)}N_X^0(1) \right)^\beta \right) \leq 
\left [\mathbb{E}_g \left(\left (\frac{\log^+ \ell_0^{-1}(X)}{\ell_0(X)}\right)^{\beta p}\right)\right]^{1/p} \left [ \mathbb{E}_g \left (N_X^0(1)^{\beta q}   \right)\right]^{1/q},$$
which by the convergence of moments in Mirzakhani-Petri \cite{MP} and the above remarks is uniformly bounded as $g\rightarrow +\infty$.
We therefore have shown that for all $\beta<1$, there exists $C>0$ independent of $g$ such that
\begin{equation}
\label{moment1}
\int_{\mathscr{M}_g} \frac{\vert \log \det(\Delta_X)\vert^\beta}{\mathrm{Vol}(X)^\beta} dX\leq C V_g.
\end{equation}
We now fix $\epsilon>0$, and $0<\beta<1$. By Theorem \ref{main1}, there exists a subset $\mathcal{A}_g(\epsilon)\subset \mathscr{M}_g$, with $\mathbb{P}(\mathcal{A}_g(\epsilon))\rightarrow 1$ as $g\rightarrow +\infty$,
such that for all $X \in \mathcal{A}_g(\epsilon)$, 
$$ (E-\epsilon)^\beta\leq \frac{\vert \log \det(\Delta_X)\vert^\beta}{\mathrm{Vol}(X)^\beta} \leq (E+\epsilon)^\beta.$$
Therefore we have
$$ \frac{1}{V_g}\int_{\mathscr{M}_g} \frac{\vert \log \det(\Delta_X)\vert^\beta}{\mathrm{Vol}(X)^\beta} dX\leq (E+\epsilon)^\beta+
\frac{1}{V_g}\int_{\mathcal{A}_g(\epsilon)^c} \frac{\vert \log \det(\Delta_X)\vert^\beta}{\mathrm{Vol}(X)^\beta} dX,$$
while
$$(E-\epsilon)^\beta\mathbb{P}(\mathcal{A}_g(\epsilon))\leq \frac{1}{V_g}\int_{\mathscr{M}_g} \frac{\vert \log \det(\Delta_X)\vert^\beta}{\mathrm{Vol}(X)^\beta} dX.$$
We now apply H\"older's inequality and estimate (\ref{moment1}). Since $0<\beta<1$, let $q>1$ be chosen such that  $0<q\beta <1$ and let $0<p<\infty$ be such that $1/p+1/q=1$, we have
$$\frac{1}{V_g}\int_{\mathcal{A}_g(\epsilon)^c} \frac{\vert \log \det(\Delta_X)\vert^\beta}{\mathrm{Vol}(X)^\beta} dX 
\leq \left (\mathbb{P}(\mathcal{A}_g(\epsilon)^c)\right)^{1/p} \left ( \frac{1}{V_g}\int_{\mathscr{M}_g} \frac{\vert \log \det(\Delta_X)\vert^{q\beta}}{\mathrm{Vol}(X)^{q\beta}} dX \right)^{1/q}$$
$$\leq C  \left(\mathbb{P}(\mathcal{A}_g(\epsilon)^c\right )^{1/p}.$$
Since $\lim_{g\rightarrow +\infty} \mathbb{P}(\mathcal{A}_g(\epsilon)^c)=0$, we definitely have for all $g$ large enough
$$(E-\epsilon)^\beta-\epsilon\leq \frac{1}{V_g}\int_{\mathscr{M}_g} \frac{\vert \log \det(\Delta_X)\vert^\beta}{\mathrm{Vol}(X)^\beta} dX\leq (E+\epsilon)^\beta +\epsilon,$$
and the proof is done for the Weil--Petterson case. 
\end{proof}

We would like to mention that in this smooth Weil--Petterson model, it is very likely that by using  finer estimates one can improve the result to the following statement: for any $\beta\in (0,2)$ we have
$$\lim_{g\rightarrow \infty} \frac{1}{V_g (4\pi(g-1))^\beta}\int_{\mathscr{M}_g} \left \vert \log \det(\Delta_X)\right \vert^\beta dX =E^\beta,$$
while for all $\beta\geq 2$, 
$$\int_{\mathscr{M}_g} \vert \log \det(\Delta_X)\vert^\beta dX=+\infty.$$
This fact was pointed out to the author by {\it Yunhui Wu and Yuxin He} in a private communication and will be published in a separate paper.

\begin{proof}[Proof for the random covers model] We recall that we have
$\chi_0=\mathbf{1}_{\{X_n\ \mathrm{connected}\}}$. In this model, the systole is bounded uniformly from below, so the a priori bound for 
$\log \det(\Delta_{X_n})$ whenever $X_n$ is connected, is actually
$$\frac{\vert \log \det(\Delta_{X_n})\vert^\beta}{\mathrm{Vol}(X_n)^\beta} \leq A \left (1+\vert \log(\lambda_*(X))\vert^\beta \right),$$
for some constant $A>0$ independent of $n$. Using the same arguments as above based on H\"older's inequality, the result follows directly from the next fact.
\begin{propo} Assuming that $X_n$ is connected, then we have for all $n$ large,
$$\lambda_1(X_n)\geq \frac{C_\Gamma}{n^{3/2}},$$
where $C_\Gamma$ depends only on the base surface $X=\Gamma \backslash \H$.
Consequently for all exponent $\beta$ with $0<\beta$, 
$$\mathbb{E}\left( \vert \log(\lambda_*(X))\vert^\beta \chi_0 \right) \leq C,$$
where $C>0$ is uniform with respect to $n$.
\end{propo}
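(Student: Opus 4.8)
The plan is to prove the eigenvalue bound deterministically (it is a statement about an arbitrary connected degree-$n$ cover of the fixed base $X$), and then feed it into the moment estimate just as in the Weil--Petersson part of the proof of Theorem~\ref{limit1}: split $\mathbb{E}(|\log\lambda_*(X_n)|^\beta\chi_0)$ over the event that the uniform spectral gap of \cite{MNP} holds and its complement, bound the integrand by a constant on the former and by $O(\log n)$ on the latter using the deterministic bound, and use that the bad event has polynomially small probability.

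\emph{Step 1 (the deterministic bound).} The geometric inputs are that along the tower the systole stays bounded below, $\ell_0(X_n)\geq\ell_0(X)=:s_0>0$ (used already in the random cover discussion above: any closed geodesic of $X_n$ projects to a closed geodesic of $X$ of no larger length), and that $\mathrm{Vol}(X_n)=n\,\mathrm{Vol}(X)$. I would bound Cheeger's isoperimetric constant $h(X_n)$ from below: for $\Omega\subset X_n$ with smooth boundary and $0<\mathrm{Area}(\Omega)\leq\tfrac12\mathrm{Vol}(X_n)$, if $\ell(\partial\Omega)\geq s_0$ then $\ell(\partial\Omega)/\mathrm{Area}(\Omega)\geq 2s_0/(n\,\mathrm{Vol}(X))$, while if $\ell(\partial\Omega)<s_0$ then each component of $\partial\Omega$ is a simple closed curve shorter than the systole, hence nullhomotopic, hence bounds an embedded disc; since the total area of these discs is $<\ell(\partial\Omega)^2/4\pi<s_0^2/4\pi\ll\mathrm{Vol}(X_n)$, the region $\Omega$ must be the union of these discs (not their complement), so the isoperimetric inequality in $\mathbb{H}^2$ gives $\mathrm{Area}(\Omega)\leq\ell(\partial\Omega)^2/4\pi$ and hence $\ell(\partial\Omega)/\mathrm{Area}(\Omega)\geq 4\pi/s_0$. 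Thus for $n$ large $h(X_n)\geq 2s_0/(n\,\mathrm{Vol}(X))$, and Cheeger's inequality yields $\lambda_1(X_n)\geq h(X_n)^2/4\geq C_\Gamma n^{-2}$ with $C_\Gamma$ depending only on $X$. (Alternatively one may quote Brooks' comparison of $\lambda_1$ of a cover with the combinatorial spectral gap of the associated Schreier graph, which for a connected bounded-degree graph on $n$ vertices is $\gg n^{-2}$.) Either way one obtains $\lambda_1(X_n)\geq C_\Gamma n^{-a}$ for a fixed exponent $a$ depending only on the method; the precise value of $a$ is immaterial in what follows, so I will simply record that $\lambda_*(X_n)=\min(\lambda_1(X_n),\tfrac14)\geq C_\Gamma n^{-a}$ and hence $|\log\lambda_*(X_n)|\leq a\log n+|\log C_\Gamma|$ whenever $X_n$ is connected.

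\emph{Step 2 (the moment bound).} Fix $\beta>0$ and a spectral-gap constant $\eta>0$ as in \cite{MNP}. Then
$$\mathbb{E}\big(|\log\lambda_*(X_n)|^\beta\chi_0\big)=\mathbb{E}\big(|\log\lambda_*(X_n)|^\beta\chi_0\,\mathbf{1}_{\lambda_1(X_n)\geq\eta}\big)+\mathbb{E}\big(|\log\lambda_*(X_n)|^\beta\chi_0\,\mathbf{1}_{\lambda_1(X_n)<\eta}\big).$$
On $\{\lambda_1(X_n)\geq\eta\}$ we have $\lambda_*(X_n)\in[\min(\eta,\tfrac14),\tfrac14]$, so the first term is at most $(\max(\log 4,|\log\eta|))^\beta$, uniformly in $n$. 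On $\{\lambda_1(X_n)<\eta\}\cap\{X_n\text{ connected}\}$, Step 1 gives $|\log\lambda_*(X_n)|^\beta\leq(a\log n+|\log C_\Gamma|)^\beta$, so the second term is at most $(a\log n+|\log C_\Gamma|)^\beta\,\mathbb{P}(\lambda_1(X_n)<\eta)$; by the quantitative form of \cite{MNP} one has $\mathbb{P}(\lambda_1(X_n)<\eta)=O(n^{-\delta})$ for some $\delta>0$, which beats the factor $(\log n)^\beta$, so the second term tends to $0$. Hence $\mathbb{E}(|\log\lambda_*(X_n)|^\beta\chi_0)$ is bounded uniformly in $n$, which is the second assertion. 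Combined with the a priori estimate $\tfrac{|\log\det(\Delta_{X_n})|^\beta}{\mathrm{Vol}(X_n)^\beta}\leq A(1+|\log\lambda_*(X_n)|^\beta)$ recorded above (valid on $\{X_n\text{ connected}\}$ since $\ell_0(X_n)\geq\ell_0(X)$), Theorem~\ref{main1}, and H\"older's inequality, this completes the proof of Theorem~\ref{limit1} for random covers exactly as in the Weil--Petersson case.

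\emph{Main obstacle.} The substance is Step 1, and inside it the uniformity over \emph{all} connected covers: the lower bound on $h(X_n)$ must be driven only by quantities that do not degenerate along the tower, which is precisely the role played by $\ell_0(X_n)\geq\ell_0(X)$ — with no lower bound on the systole there can be no lower bound on $\lambda_1$ in terms of the volume at all. In Step 2 the only point to verify is that \cite{MNP} supplies the spectral-gap estimate in quantitative (polynomially small probability) form, and not merely as a limit.
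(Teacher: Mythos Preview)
Your Step~2 is essentially identical to the paper's: both split over the event $\{\lambda_1(X_n)\geq r\}$ for a suitable threshold $r$, invoke the quantitative form of the spectral gap from \cite{MNP} to get $\mathbb{P}(\lambda_1(X_n)<r)=O(n^{-\alpha})$ for some $\alpha>0$, and then use the deterministic bound from Step~1 to control $|\log\lambda_*(X_n)|$ by $O(\log n)$ on the bad event, so that the product $n^{-\alpha}(\log n)^\beta$ goes to zero.

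Step~1, however, is genuinely different. The paper argues via the twisted Laplacian: writing $\lambda_1(X_n)=\min\{\lambda_1(X),\lambda_0(\Delta_{\rho_n^0})\}$ and applying Sunada's lower bound $\lambda_0(\Delta_{\rho_n^0})\geq C_S\inf_{\|U\|=1}\max_{\gamma\in S}\|\rho_n^0(\gamma)U-U\|$, it then uses transitivity of the $\Gamma$-action on $[n]$ together with the diameter bound $\leq n-1$ of the Schreier graph to show that this infimum is at least $c/((n-1)\sqrt{2n})$, yielding the stated exponent $n^{-3/2}$. Your route is instead purely geometric: bound the Cheeger constant $h(X_n)$ directly, exploiting that the systole stays $\geq\ell_0(X)$ so that any competitor $\partial\Omega$ of total length below the systole consists of contractible curves, which forces $\Omega$ to sit inside discs lifting isometrically to $\mathbb{H}^2$ where the isoperimetric inequality applies. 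This is correct (minor quibble: $\Omega$ is \emph{contained in} the union of the bounded discs rather than equal to it, but your isoperimetric conclusion $\mathrm{Area}(\Omega)\leq\ell(\partial\Omega)^2/4\pi$ still follows) and more elementary, but it only delivers $\lambda_1(X_n)\geq C_\Gamma n^{-2}$, not the $n^{-3/2}$ of the Proposition as stated. The paper itself remarks that the Brooks Schreier-graph comparison gives this same $n^{-2}$ and that it ``is slightly worse but good enough for our purpose''; since you make exactly this observation, the only gap is that you have not established the first displayed inequality of the Proposition with its stated exponent, while the consequence for the moment bound and for Theorem~\ref{limit1} is fully proved.
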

We first need to prove a deterministic lower bound on $\lambda_1(X_n)$, provided that $X_n$ is connected. We know that the spectrum of
$\Delta_{X_n}$ coincides, with multiplicity, with the spectrum of $\Delta_{\rho_n}$, which is the Laplacian on the base surface twisted by the unitary representation $\rho_n$ of $\Gamma$ defined previously. See for example \cite[Page 51]{Venkov}. If $X_n$ is connected, then 
$$\lambda_1(X_n)=\min\{\lambda_1(X) ,\lambda_0(\Delta_{\rho_n^0})\},$$
where $\rho_n^0$ is the representation given by 
$$\rho_n^0(\gamma)U:=U_{\phi_n^{-1}(\gamma)},$$
where $U \in V_n^0:=\{ U\in \C^n\ :\ \sum_{j=1}^nU(j)=0\}$ and $U_{\phi_n^{-1}(\gamma)}(j)=U(\phi_n^{-1}(\gamma)(j))$. Notice that $\rho_n^0$ is just the orthogonal complement to the trivial representation
in $\rho_n$. Let us fix a system of generators of $\Gamma$, denoted by $S$. A result of Sunada \cite{Sunada} then says that there exists $C_S$ depending only on $X$ and $S$ such that
$$\lambda_0(\Delta_{\rho_n^0})\geq C_S \inf_{U\in V_n^0 \atop \Vert U\Vert=1} \max_{\gamma \in S} \Vert \rho_n^0(\gamma) U-U\Vert. $$
Let us take $U\in V_n^0$ such that $\Vert U\Vert=1$. We therefore have
$$1=\Vert U \Vert^2 \leq n \max_{j} (\Re(U(j))^2+\Im(U(j))^2),$$
and we can assume without loss of generality that we have $\Re(U(j_0))\geq \frac{1}{\sqrt{2n}}$ for some $j_0\in [n]$. Because we have in addition 
$$\sum_j  \Re(U(j))=0,$$
there exists also $j_1 \in [n]$ such that $\Re(U(j_1))\leq 0$. If $X_n$ is connected, then $\Gamma$ acts transitively on $[n]$ via $\phi_n$ and there exists $\gamma_0 \in \Gamma$ such that $\phi_n(\gamma_0)^{-1}(j_0)=j_1$. To bound the word length of $\gamma_0$, consider the graph with set of vertices $[n]$ and define edges by connecting $i$ to $j$ if there exists 
$g\in S$ such that $\phi_n(g)(i)=j$. By transitivity of the action, this graph is connected and thus has diameter less than $n-1$. Therefore we can choose $\gamma_0$ with word length (with respect to $S$) less than $n-1$.
We now have
$$\frac{1}{\sqrt{2n}}\leq \vert \Re(U(j_0))-\Re(U(j_1))\vert \leq  \Vert \rho_n^0(\gamma_0) U-U\Vert.$$
Writing
$$\gamma_0=g_1g_2\ldots g_m,$$
with $g_j \in S$ and $m\leq n-1$, we have therefore
$$\Vert \rho_n^0(\gamma_0) U-U\Vert\leq \sum_{j=1}^m \Vert \rho_n^0(g_j) U -U\Vert\leq (n-1) \max_{g\in S}\Vert \rho_n^0(g) U-U\Vert,$$
which yields
$$ \max_{g\in S}\Vert \rho_n^0(g) U-U\Vert \geq \frac{1}{(n-1)\sqrt{2n}}.$$
The first claim of the proposition is proved. Alternatively, one can use directly a result of Brooks \cite{Brooks} which relates $\lambda_1(X_n)$ to the Cheeger constants of Schreier graphs of the covers (with a choice of generators of $\Gamma$) to obtain a similar lower bound $\lambda_1(X_n)\geq C_\Gamma n^{-2}$ which is slightly worse but good enough for our purpose.

From the proof of the uniform spectral gap in \cite{MNP} one can directly derive that for all $1/4>r>0$, for all $\epsilon>0$, as $n\rightarrow \infty$,
$$\mathbb{P}(\lambda_1(X_n)\leq r)\leq \frac{C_\epsilon}{n^{4\sqrt{(1/4-r)}-1-\epsilon}}.$$
From that we deduce that for all $\alpha<1$ we can find $r_\alpha>0$ such that 
$$\mathbb{P}(\lambda_1(X_n)\leq r_\alpha)\leq \frac{C_\alpha}{n^{\alpha}}.$$
We now fix any $\beta>0$ and fix some $0<\alpha<1$. We can use the fact that we have (by the lower bound on $\lambda_*(X)$) 
$$\vert \log(\lambda_*(X))\vert^\beta=O((\log(n))^\beta),$$
where the implied constant is uniform with respect to $n$.
We therefore get as $n\rightarrow \infty$,
$$\mathbb{E}(\vert \log(\lambda_*(X))\vert^\beta \chi_0)\leq \vert \log(r_\alpha)\vert^\beta(1+O(n^{-\alpha}))+O\left(n^{-\alpha} \vert \log(n)\vert^\beta\right)=O(1)$$
and the proof is done. 
\end{proof}

We conclude with some comments.

\begin{itemize}
\item It would be interesting to know if a similar type of result can be proved for the Brooks--Makover model, in particular can one show that
$$\mathbb{E}\left ( \frac{1}{\ell_0(S_n^C)} \right)$$
is finite and uniformly bounded with respect to $n$ ? This will require an effective version of the compactification
procedure, see the paper of Mangoubi \cite{Mangoubi}.
\item It is likely that our results can be extended to finite area surfaces, see Efrat \cite{Efrat} for the definition and properties of determinants in this context, or even geometrically finite surfaces where a notion of determinant also holds, see \cite{BJP}. 
\end{itemize}

\end{document}